\newcommand{\C}{\mathbf{C}}
\newcommand{\F}{\mathfrak{F}}
\newcommand{\Fa}{\mathfrak{F}_a}
\newcommand{\Z}{\mathscr{Z}}
\newtheorem{theorem}{Theorem}[section]
\newtheorem{corollary}[theorem]{Corollary}
\newtheorem{proposition}[theorem]{Proposition}
\newtheorem*{example}{\sl Example}
\newtheorem*{remark}{\sl Remark}
\numberwithin{equation}{section}
\begin{document}
\title[]{On generalized Fermat Diophantine functional and partial differential equations in $\C^2$}

\author[]{Wei Chen$^\ddag$, Qi Han$^{\star,\dag}$, Qiong Wang$^\ddag$}

\address{\footnotesize{$^{\ddag}$School of Sciences, Chongqing University of Posts \& Telecommunications, Chongqing 400065, China
\vskip 2pt $^\dag$Department of Mathematics, Texas A\&M University-San Antonio, San Antonio, Texas 78224, USA
\vskip 2pt Email: {\sf weichensdu@126.com} (W. Chen)} {\sf qhan@tamusa.edu} (Q. Han) {\sf qiongwangsdu@126.com} (Q. Wang)}

\thanks{$^\star$Qi Han is the corresponding author of this research work.}

\thanks{{\sf 2020 Mathematics Subject Classification.} 32A20, 32A22, 35F20.}

\thanks{{\sf Keywords.} Fermat Diophantine functional equations, partial differential equations, meromorphic solutions.}


\begin{abstract}
In this paper, we characterize meromorphic solutions $f(z_1,z_2),g(z_1,z_2)$ to the generalized Fermat Diophantine functional equations $h(z_1,z_2)f^m+k(z_1,z_2)g^n=1$ in $\C^2$ for integers $m,n\geq2$ and nonzero meromorphic functions $h(z_1,z_2),k(z_1,z_2)$ in $\C^2$.
Meromorphic solutions to associated partial differential equations are also studied.
\end{abstract}

\maketitle

\section{Introduction}\label{Int}
In 1995, Andrew Wiles \cite{Wi, WT} finally proved the profound Fermat's Last Theorem, remained open for over 350 years, which states, in a simple-looking form, that the equations $a^n+b^n=c^n$ have no positive integral solution if $n\geq3$.
An equivalent form of this prestigious result is that the equations $x^n+y^n=1$ have no positive rational solution if $n>2$.

It seems that Montel \cite{Mo} in 1927 first studied the functional equations $f^n+g^n=1$ analogous to Fermat equations $x^n+y^n=1$, and he observed that all entire solutions $f,g$ must be constant if $n>2$; see also Jategaonkar \cite{Ja}.
Later work were proved by Baker \cite{Ba} and Gross \cite{Gr1,Gr2}, where they showed that all meromorphic solutions to $f^n+g^n=1$ must be constant if $n>3$ and also characterized associated nonconstant meromorphic solutions for $n=2,3$ in $\C$.

Cartan \cite{Ca} in 1933 first considered the Fermat Diophantine functional equations $f^m+g^n=1$, and he observed that all entire solutions $f,g$ must be constant if $mn>m+n$; see also Section 4 of Gundersen and Hayman \cite{GH}.
We are not aware of any earlier work after that, and it seems to us Li \cite[Section 4]{Li12} in 2012 first investigated meromorphic solutions to $f^m+g^n=1$, where he derived that all meromorphic solutions must be constant if $mn>\max\{2m+n,m+2n\}$ and further analyzed associated nonconstant meromorphic solutions for the remaining cases, which was recently complemented by Chen, Han and Liu \cite[Proposition 1]{CHL}.

In a different context, Khavinson \cite{Kh1} in 1995 derived the first result on entire solutions $u$ to the classical 2d eikonal equation $u^2_{z_1}+u^2_{z_2}=1$ in $\C^2$ and proved that $u$ must be affine; see also Saleeby \cite{Sa1}.
Khavinson's work inspired several results such as Li \cite{Li05TAMS}, Hemmati \cite{He}, Saleeby \cite{Sa2,Sa3} and Chen and Han \cite{CH,Ha} on complex analytic solutions to variants of $u^2_{z_1}+u^2_{z_2}=1$ in $\C^2$, and notably provided some insight to the work of Caffarelli and Crandall \cite[Remark 2.3]{CC}.
One may also consult Khavinson and Lundberg \cite{KL}, and Khavinson \cite{Kh2}.

The first order nonlinear partial differential equation $u^2_{z_1}+u^2_{z_2}=1$ is a special form of the functional equation $f^2+g^2=1$ in $\C^2$ that admits of nonconstant complex analytic solutions such as
$\biggl\{\begin{array}{ll}
f=\sin(\alpha(z_1,z_2))\medskip\\
g=\cos(\alpha(z_1,z_2))\end{array}$
(entire) and
$\biggl\{\begin{array}{ll}
f=\sec(\alpha(z_1,z_2))\medskip\\
g=\texttt{i}\tan(\alpha(z_1,z_2))\end{array}$
(meromorphic) for an entire function $\alpha(z_1,z_2)$ in $\C^2$.
Noting that $(u_{z_1})_{z_2}=(u_{z_2})_{z_1}$ for complex analytic functions, Li (with Ye) in a series of papers \cite{LY,Li08,Li14} studied the conditions under which there is no nonconstant meromorphic solution to the Fermat Diophantine functional equations $f^m+g^n=1$ for $m,n\geq2$ in $\C^2$, and the condition that $\Z(f_{z_2})=\Z(g_{z_1})$ counting multiplicities was identified with $\Z(\beta)$ denoting the zero set of a complex analytic function $\beta$.

When a similar inquiry on the nonexistence of nonconstant meromorphic solutions to $f^m+g^n=1$ for $m,n\geq2$ in $\C$ arises, Chen, Han and Liu \cite[Theorem 4]{CHL} recently provided (weaker) conditions such as $\Z(f')=\Z(g')$ ignoring multiplicities if $m=n=2$, but $\Z(f')\subseteq\Z(g')$ or $\Z(g')\subseteq\Z(f')$ ignoring multiplicities if $m,n\geq2$ and $(m,n)\neq(2,2)$.

Li \cite{Li05FM,Li18} studied the generalized Fermat Diophantine functional equations
\begin{equation}\label{Eq1.1}
h(z_1)f^m+k(z_2)g^n=1
\end{equation}
and proved the result below on associated complex analytic solutions $f(z_1,z_2),g(z_1,z_2)$ in $\C^2$.

\begin{theorem}\label{Thm1.1}
Assume that $h(z_1),k(z_2)$ are two nonzero meromorphic functions in $\C$, and $m,n\geq2$ are two integers.
Then, one has the results as follows.\\
{\bf(A)} All meromorphic solutions $f(z_1,z_2),g(z_1,z_2)$ to \eqref{Eq1.1} in $\C^2$, with $(m,n)\neq(2,2)$, satisfy
\begin{equation}\label{Eq1.2}
T(r,f)+T(r,g)=O(T(r,h)+T(r,k))
\end{equation}
outside a set of $r$ of finite Lebesgue measure, provided $\Z(f_{z_2})=\Z(g_{z_1})$ ignoring multiplicities.\\
{\bf(B)} All entire solutions $f(z_1,z_2),g(z_1,z_2)$ to \eqref{Eq1.1} in $\C^2$, with $m=n=2$, satisfy \eqref{Eq1.2} again outside a set of $r$ of finite Lebesgue measure, provided $\Z(f_{z_2})=\Z(g_{z_1})$ counting multiplicities.
\end{theorem}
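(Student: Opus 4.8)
The plan is to reduce the two–variable equation \eqref{Eq1.1} to a single growth estimate for the auxiliary function $u:=hf^m$ and then to exploit the heavy ramification that the $m$-th and $n$-th powers force on $u$ over $0,1,\infty$. Writing $1-u=-kg^n$ and using $T(r,1-u)=T(r,u)+O(1)$ together with $T(r,f^m)=mT(r,f)+O(1)$ and $T(r,g^n)=nT(r,g)+O(1)$, one gets
\begin{equation}\label{Eqreduce}
mT(r,f)\leq T(r,u)+O(T(r,h)),\qquad nT(r,g)\leq T(r,u)+O(T(r,k)),
\end{equation}
so it suffices to prove $T(r,u)=O(T(r,h)+T(r,k))$ outside a set of finite Lebesgue measure.

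First I would differentiate \eqref{Eq1.1} separately in $z_1$ and in $z_2$; since $h$ is free of $z_2$ and $k$ of $z_1$, this yields
\begin{equation}\label{Eqdiff}
h'f^m+hmf^{m-1}f_{z_1}+kng^{n-1}g_{z_1}=0,\qquad hmf^{m-1}f_{z_2}+k'g^n+kng^{n-1}g_{z_2}=0,
\end{equation}
in which $h'/h$ and $k'/k$ enter only with proximity $S(r,h)$ and $S(r,k)$. Solving the first relation in \eqref{Eqdiff} for $kng^{n-1}g_{z_1}$ and the second for $hmf^{m-1}f_{z_2}$ and dividing expresses the ratio $f_{z_2}/g_{z_1}$ — the object governed by $\Z(f_{z_2})=\Z(g_{z_1})$ — through $f,g,h,k$ and the logarithmic derivatives $f_{z_1}/f,g_{z_2}/g,h'/h,k'/k$. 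The value–distribution input is the truncated Second Main Theorem (Stoll / Carlson–Griffiths in $\C^2$) for $u$ at $0,1,\infty$,
\begin{equation}\label{Eqsmt}
T(r,u)\leq\overline{N}(r,1/u)+\overline{N}(r,1/(u-1))+\overline{N}(r,u)+S(r,u).
\end{equation}
Since $u=hf^m$ vanishes to order at least $m$ at each zero of $f$ off the divisor of $h$, and $u-1=-kg^n$ to order at least $n$ at each zero of $g$, the first two terms obey $\overline{N}(r,1/u)\leq\tfrac1m T(r,u)+O(T(r,h))$ and $\overline{N}(r,1/(u-1))\leq\tfrac1n T(r,u)+O(T(r,k))$.

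The decisive term is the pole count $\overline{N}(r,u)=\overline{N}(r,v)$ with $v:=kg^n$: because $u+v=1$ the poles of $u$ and $v$ coincide, and off the divisor of $hk$ such a pole forces $f$ and $g$ to blow up together. I expect this to be the main obstacle, and it is here that $\Z(f_{z_2})=\Z(g_{z_1})$ does its work: inserting the identity for $f_{z_2}/g_{z_1}$ obtained from \eqref{Eqdiff} and applying the First Main Theorem together with the logarithmic derivative lemma should confine the common poles of $f,g$ to a contribution $O(T(r,h)+T(r,k))+S(r,u)$, rather than a fixed proportion of $T(r,u)$.

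Granting this, \eqref{Eqsmt} collapses to $T(r,u)\leq\bigl(\tfrac1m+\tfrac1n\bigr)T(r,u)+O(T(r,h)+T(r,k))+S(r,u)$. As $m,n\geq2$, one has $\tfrac1m+\tfrac1n<1$ precisely when $(m,n)\neq(2,2)$, so the inequality is self–improving and delivers $T(r,u)=O(T(r,h)+T(r,k))$ off a set of finite measure; with \eqref{Eqreduce} this proves \textbf{(A)}. For \textbf{(B)}, where $m=n=2$ makes $\tfrac1m+\tfrac1n=1$ and the estimate borderline, entireness already restricts $\overline{N}(r,u)$ to the poles of $h$, so the missing sharpening must come from the stronger hypothesis that $\Z(f_{z_2})=\Z(g_{z_1})$ holds with multiplicities. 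Here I would instead use the factorization $hf^2+kg^2=(\sqrt h\,f+\texttt{i}\sqrt k\,g)(\sqrt h\,f-\texttt{i}\sqrt k\,g)=1$, whose two factors are reciprocal and, for entire data, unit–like, so that $\sqrt h\,f$ and $\texttt{i}\sqrt k\,g$ are expressible through a single exponential $e^{\alpha}$; the multiplicity–counted condition on $f_{z_2}/g_{z_1}$ then constrains $\alpha_{z_1},\alpha_{z_2}$ and the zeros of $\sinh\alpha,\cosh\alpha$ tightly enough to recover \eqref{Eq1.2}.
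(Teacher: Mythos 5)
Your reduction to $T(r,u)=O(T(r,h)+T(r,k))$ for $u:=hf^m$ and the truncated second main theorem estimates for $\overline{N}(r,1/u)$ and $\overline{N}(r,1/(u-1))$ are sound, but the decisive step is exactly where the proof stops being a proof: you assert that the hypothesis $\Z(f_{z_2})=\Z(g_{z_1})$ ``should confine'' the pole term $\overline{N}(r,u)$ to $O(T(r,h)+T(r,k))+S(r,u)$, and no mechanism is offered. There cannot be one in the form you propose: the hypothesis concerns \emph{zeros} of the partial derivatives, and at a common pole of $f$ and $g$ both $f_{z_2}$ and $g_{z_1}$ have poles, so the hypothesis is vacuous there. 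Moreover, the smallness of $\overline{N}(r,u)$ is essentially the conclusion of the theorem (poles of $u$ off the divisor of $hk$ are poles of $f$), so assuming it is circular; and without the zero-set hypothesis the theorem is false (e.g.\ $f=F(\alpha),g=G(\alpha)$ with $F^2+G^3=1$ parametrized by Weierstrass elliptic functions and $\alpha$ entire), with counterexamples in which $\overline{N}(r,u)$ is a fixed proportion of $T(r,u)$. The structural point is that your SMT inequality already handles the zeros of $f,g$ via the factor $\tfrac1m+\tfrac1n$ \emph{without using the hypothesis at all} — a signal the hypothesis is being spent in the wrong place. The paper (whose Theorem \ref{Thm1.2} subsumes Theorem \ref{Thm1.1}, since \eqref{Eq1.4} reduces to $\Z(f_{z_2})=\Z(g_{z_1})$ when $h=h(z_1)$, $k=k(z_2)$) does the opposite: it forms the auxiliary function $P$ of \eqref{Eq2.3} built from $F=h_{z_2}f+mhf_{z_2}$ and $G=k_{z_1}g+nkg_{z_1}$, shows $m(r,P)$ is small by the logarithmic derivative lemma, shows $P$ is \emph{holomorphic} at the poles of $f,g$ (so poles cost nothing there), and uses the zero-set hypothesis together with $(m,n)\neq(2,2)$ to force $P$ to vanish at every zero of $f$ or $g$, whence $\overline{N}(r,1/f)+\overline{N}(r,1/g)\leq\overline{N}(r,1/P)\leq T(r,P)$ is small; the pole term $\overline{N}(r,f)\leq T(r,f)$ is then absorbed not because it is small but because the left side of the resulting inequality carries the coefficient $2m\geq4$.

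Your sketch for \textbf{(B)} also has gaps. The factorization $hf^2+kg^2=(\sqrt h\,f+\texttt{i}\sqrt k\,g)(\sqrt h\,f-\texttt{i}\sqrt k\,g)$ requires global single-valued square roots of the meromorphic functions $h,k$, which fail to exist when $h$ or $k$ has a zero or pole of odd order; and even where defined, the two factors are zero-free but not entire (they inherit poles from $\sqrt h,\sqrt k$), so they need not be exponentials of entire functions, and the final claim that the multiplicity-counted condition ``constrains $\alpha$ tightly enough'' is not argued. The paper instead proves the $m=n=2$ case with a second auxiliary function $P_1$ as in \eqref{Eq2.12}, whose poles are controlled (multiplicity at most $2$ at simple poles of $f$) and whose zeros at zeros of $f$ have multiplicity at least $2l$, leading to $T(r,f)\leq\overline{N}(r,f)+S(r,f)+S(r,g)+O(T(r,h)+T(r,k))$ and concluding via the deficiency $\Theta(\infty,f)>0$, which holds automatically for entire $f$; that is the route you would need to follow, or else supply the missing pole estimate, to complete the argument.
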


If $h(z_1)\equiv k(z_2)\equiv1$, then all complex analytic solutions are reduced to constant, consistent with the proceeding work \cite{LY,Li08,Li14}.
Nevertheless, the functions $h(z_1),k(z_2)$ cannot be readily replaced by functions $h(z_1,z_2),k(z_1,z_2)$ in $\C^2$ without modifying conditions.
An example was given in \cite[Remark 2.2]{Li05FM}, and below is another one very much akin to that.

\begin{example}
One has $h(z_1,z_2)=\frac{z_1^2}{z_2^2}$, $k(z_1,z_2)=z_2^2$, $f(z_1,z_2)=\frac{z_2}{z_1}\sin z_1$ and $g(z_1,z_2)=\frac{1}{z_2}\cos z_1$ satisfying
$h(z_1,z_2)f^2+k(z_1,z_2)g^2=1$ with $\Z(f_{z_2})=\Z(g_{z_1})$ counting multiplicities, but the estimate \eqref{Eq1.2} apparently fails.
\end{example}

It is natural to seek conditions in which one can use $h(z_1,z_2),k(z_1,z_2)$ in $\C^2$ as coefficients in \eqref{Eq1.1} in place of $h(z_1),k(z_2)$ in $\C$.
Li \cite[Section 4]{Li12} has already observed the estimate \eqref{Eq1.2} for meromorphic solutions $f(z_1,z_2),g(z_1,z_2)$ in $\C^2$ to
\begin{equation}\label{Eq1.3}
h(z_1,z_2)f^m+k(z_1,z_2)g^n=1
\end{equation}
when $mn>\max\{2m+n,m+2n\}$, and for entire solutions $f(z_1,z_2),g(z_1,z_2)$ in $\C^2$ to \eqref{Eq1.3} when $mn>m+n$, with no constraint on the zero sets $\Z(f_{z_2}),\Z(g_{z_1})$.

This paper, inspired by Li (with Ye, and L\"{u}) \cite{Li05FM,LY,Li08,Li12,Li14,Li18,LL}, aims at recognizing conditions in which meromorphic solutions to \eqref{Eq1.3} in $\C^2$ satisfy \eqref{Eq1.2} for the remaining cases of $mn>\max\{2m+n,m+2n\}$.
We mainly focus on $(m,n),(n,m)=(2,2),(2,3),(2,4),(3,3)$, and also provide a general counterexample for the cases $m=1,n\geq2$.

Before to proceed, let $\F$ be the set of meromorphic functions $f$ in $\C^2$ such that $\Theta(\infty,f)>0$, where $\Theta(\infty,f)$ denotes a deficient value of $f$ at $\infty$; see Hayman \cite[Section 2.4]{Hay1}.
Notice if $f$ is entire, then $\Theta(\infty,f)=1$.
So, all entire functions are members of $\F$.

Our main results of this paper can be formulated as follows.

\begin{theorem}\label{Thm1.2}
Assume that $h(z_1,z_2),k(z_1,z_2)$ are two nonzero meromorphic functions in $\C^2$, and $m,n\geq2$ are two integers.
Then, one has the results as follows.\\
{\bf(A)} All meromorphic solutions $f(z_1,z_2),g(z_1,z_2)$ to \eqref{Eq1.3} in $\C^2$, with $(m,n)\neq(2,2)$, satisfy the estimate \eqref{Eq1.2} outside a set of $r$ of finite Lebesgue measure, provided
\begin{equation}\label{Eq1.4}
\Z\big((hf^m)_{z_2}/(mhf^{m-1})\big)=\Z\big((kg^n)_{z_1}/(nkg^{n-1})\big)
\end{equation}
ignoring multiplicities.\\
{\bf(B)} All meromorphic solutions $f(z_1,z_2),g(z_1,z_2)$ to \eqref{Eq1.3} in $\C^2$, with $m=n=2$, again satisfy the estimate \eqref{Eq1.2} outside a set of $r$ of finite Lebesgue measure, provided
\begin{equation}\label{Eq1.5}
\Z\big((hf^2)_{z_2}/(2hf)\big)=\Z\big((kg^2)_{z_1}/(2kg)\big)
\end{equation}
ignoring multiplicities and either $f\in\F$ or $g\in\F$.
\end{theorem}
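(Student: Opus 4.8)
The plan is to recast \eqref{Eq1.3} in the additive form $F+G=1$ with $F:=hf^m$ and $G:=kg^n$, and then combine the Second Main Theorem in $\C^2$ for $F$ with a multiplicity count, using \eqref{Eq1.4} (resp.\ \eqref{Eq1.5}) to break the equality that arises at the critical exponents. Differentiating \eqref{Eq1.3} gives $F_{z_1}=-G_{z_1}$ and $F_{z_2}=-G_{z_2}$, while a direct computation shows
\[
(hf^m)_{z_2}=mhf^{m-1}\pr{f_{z_2}+\tfrac{h_{z_2}}{mh}f},\qquad (kg^n)_{z_1}=nkg^{n-1}\pr{g_{z_1}+\tfrac{k_{z_1}}{nk}g}.
\]
Hence the two quotients in \eqref{Eq1.4} are exactly $P:=(hf^m)_{z_2}/(mhf^{m-1})$ and $Q:=(kg^n)_{z_1}/(nkg^{n-1})$, which collapse to $f_{z_2}$ and $g_{z_1}$ once $h=h(z_1)$, $k=k(z_2)$; thus \eqref{Eq1.4} is the precise analogue of the condition $\Z(f_{z_2})=\Z(g_{z_1})$ of Theorem \ref{Thm1.1}, and it is this coincidence of zero sets that I mean to exploit.

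Next I would do the multiplicity bookkeeping (assuming $f,g$ nonconstant, the constant case being immediate). Because $G=1-F$, the poles of $F$ and of $G$ coincide with identical multiplicities; moreover, away from the zeros and poles of $h$ and $k$, a zero of $f$ forces a zero of $F$ of order $\geq m$, a zero of $g$ forces a zero of $F-1=-G$ of order $\geq n$, and a common pole has order divisible by both $m$ and $n$, hence $\geq\operatorname{lcm}(m,n)$. Feeding $\overline N(r,1/F)\leq\tfrac1mT(r,F)$, $\overline N(r,1/(F-1))\leq\tfrac1nT(r,F)$ and $\overline N(r,F)\leq\tfrac{1}{\operatorname{lcm}(m,n)}T(r,F)$ into the Second Main Theorem for $F$ with targets $0,1,\infty$ yields, modulo $O(T(r,h)+T(r,k))$ and an error $S(r,F)$,
\[
T(r,F)\leq\pr{\tfrac1m+\tfrac1n+\tfrac{1}{\operatorname{lcm}(m,n)}}T(r,F)+O(T(r,h)+T(r,k))+S(r,F).
\]
The bracketed coefficient is $<1$ for all exponents outside the list $(m,n),(n,m)=(2,2),(2,3),(2,4),(3,3)$ (this already reproves \eqref{Eq1.2} in Li's range with no extra hypothesis), it equals $1$ exactly for $(2,3),(2,4),(3,3)$, and it equals $\tfrac32$ for $(2,2)$.

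The crux of part {\bf(A)}, and the step I expect to be hardest, is to turn the set-equality \eqref{Eq1.4} into a strictly positive saving that defeats the borderline coefficient $1$ for $(2,3),(2,4),(3,3)$. The relations $mhf^{m-1}P=-nkg^{n-1}\tilde Q$ and $nkg^{n-1}Q=-mhf^{m-1}\tilde P$, coming from $F_{z_j}+G_{z_j}=0$ with $\tilde P,\tilde Q$ the companion quotients formed from the remaining partials, let me express the ratio $P/Q$ through $f,g,h,k$ and their logarithmic derivatives. The coincidence $\Z(P)=\Z(Q)$ means that $P/Q$ acquires neither a zero nor a pole at any point where just one of $P,Q$ vanishes, so the counting functions of $P/Q$ are governed purely by multiplicity discrepancies and by the poles of $f,g$ and the zeros/poles of $h,k$, that is by $O(T(r,h)+T(r,k))$, while the lemma on the logarithmic derivative throws the proximity part into $S(r)$. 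This extra relation is what I would feed back into the borderline inequality to absorb the ramification of $F$ (its $z_2$-critical part, equal by \eqref{Eq1.4} to its $z_1$-critical part) and sharpen it to $T(r,F)=O(T(r,h)+T(r,k))+S(r)$. The genuinely delicate point is that \eqref{Eq1.4} is assumed \emph{ignoring} multiplicities, so the saving must be extracted from equality of zero \emph{sets} and not of divisors; once this is done, $T(r,f)\leq\frac1mT(r,F)+O(T(r,h))$ and $T(r,g)\leq\frac1nT(r,G)+O(T(r,k))$ together with $T(r,G)=T(r,F)+O(1)$ deliver \eqref{Eq1.2} off a set of $r$ of finite Lebesgue measure.

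For part {\bf(B)} the coefficient $\tfrac32$ overshoots $1$, so \eqref{Eq1.5} alone cannot close the estimate and the deficiency hypothesis must intervene. Here I would use the factorization $hf^2+kg^2=\pr{\sqrt h\,f+\texttt{i}\sqrt k\,g}\pr{\sqrt h\,f-\texttt{i}\sqrt k\,g}=1$, so that each factor is a nowhere-vanishing unit and the pole behaviour of $f,g$ is reduced to that of a single unit twisted by $h,k$. If $f\in\F$ (the case $g\in\F$ being symmetric) then $\Theta(\infty,f)>0$, whence $N(r,f)\leq(1-\Theta(\infty,f)+o(1))T(r,f)$; this strictly lowers the pole coefficient below its critical value, and \eqref{Eq1.5} again plays the tie-breaking role of the previous paragraph. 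Combining the deficiency saving with the factorization then leaves a surviving coefficient $<1$, giving $T(r,F)=O(T(r,h)+T(r,k))+S(r)$ and hence \eqref{Eq1.2} as before; the obstacle specific to this case is arranging the two independent savings so that their combination is genuinely strict.
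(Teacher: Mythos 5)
Your reduction $U+V=1$ with $U:=hf^m$, $V:=kg^n$, and the Second Main Theorem bookkeeping (including the $\operatorname{lcm}(m,n)$ refinement at the common poles) are sound as far as they go, and they do recover the unconditional range; but the step you yourself flag as the crux of part {\bf(A)} is a genuine gap, and the sketch you offer for it would fail. From $\Z(P)=\Z(Q)$ ignoring multiplicities you infer that the zero and pole counting functions of $P/Q$ are ``governed purely by multiplicity discrepancies \dots{} that is by $O(T(r,h)+T(r,k))$''. That is a non sequitur: the discrepancy of vanishing orders at the common zeros is in no way controlled by $h$ and $k$, so $N(r,P/Q)+N(r,Q/P)$ may be of the size $T(r,f)+T(r,g)$, and no strict saving over the borderline coefficient $1$ results. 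The paper does not attempt a marginal saving at all; it builds the single auxiliary function
\[
P:=\frac{(h_{z_2}f+mhf_{z_2})^m(k_{z_1}g+nkg_{z_1})^n}{f^{2m}g^{2n}}=\frac{1}{f^{2m}g^{2n}}\Big(\frac{U_{z_2}}{f^{m-1}}\Big)^m\Big(\frac{V_{z_1}}{g^{n-1}}\Big)^n
\]
and proves three facts: (i) $m(r,P)=S(r,f)+S(r,g)+O(T(r,h)+T(r,k))$ by the logarithmic-derivative lemma after rewriting with $1-hf^m=kg^n$; (ii) $P$ is holomorphic at every pole of $f$ or $g$ away from the zeros and poles of $h,k$, since a pole of $f$ of order $l$ forces a pole of $g$ of order $ml/n$ and $m(l-1)+n(ml/n-1)=2ml-m-n\geq0$; (iii) $\operatorname{div}^0_P\geq1$ at every zero of $f$ or $g$. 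The hypothesis \eqref{Eq1.4} enters exactly once, at a simple zero $\xi_0$ of $f$: there $U_{z_2}$ vanishes a priori only to order $m-1$, while $V_{z_1}(\xi_0)=0$ and $g(\xi_0)\neq0$ force, via the zero-set equality, the quotient $U_{z_2}/f^{m-1}$ to vanish as well, one extra order; the resulting count $m+n(m-1)-2m\geq1$ (and its analogue at multiple zeros) is precisely where $(m,n)\neq(2,2)$ is used. Then $\overline N(r,1/f)+\overline N(r,1/g)\leq2T(r,P)=S(r,f)+S(r,g)+O(T(r,h)+T(r,k))$: the zero terms are annihilated wholesale, and the SMT chain closes with the crude estimate $\overline N(r,f)\leq T(r,f)$ and $m\geq2$, with no case distinction among $(2,3),(2,4),(3,3)$ ever being made.

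Part {\bf(B)} of your proposal is broken independently of this. The factorization $hf^2+kg^2=(\sqrt h\,f+\texttt{i}\sqrt k\,g)(\sqrt h\,f-\texttt{i}\sqrt k\,g)$ presupposes global meromorphic square roots of $h$ and $k$ in $\C^2$, which need not exist (their divisors may carry odd multiplicities along some components); and even when they do exist, for meromorphic $f,g$ the two factors are not nowhere-vanishing units: the product being $1$ only says the zeros of one factor are the poles of the other (compare the $f=\sec\alpha$, $g=\texttt{i}\tan\alpha$ solutions). You also leave open exactly the point at issue, namely how $\Theta(\infty,f)>0$ and \eqref{Eq1.5} combine quantitatively. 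The paper again argues through an auxiliary function, now the asymmetric $P_1:=F^2G^2/(f^2g^4)$ with $F=U_{z_2}/f$, $G=V_{z_1}/g$: condition \eqref{Eq1.5} gives $\operatorname{div}^0_{P_1}\geq2l$ at a zero of $f$ of order $l$ and $\operatorname{div}^0_{P_1}\geq0$ at zeros of $g$, while away from the zeros and poles of $h,k$ the only poles of $P_1$ arise at \emph{simple} poles of $f$ and have order at most $2$, whence $N(r,P_1)\leq2\overline N(r,f)+O(T(r,h)+T(r,k))$. Writing $2T(r,f)=m(r,1/f^2)+2N(r,1/f)+O(1)$ and using $P=P_1/f^2$ then yields $T(r,f)\leq\overline N(r,f)+S(r,f)+S(r,g)+O(T(r,h)+T(r,k))$, i.e. $\Theta(\infty,f)\,T(r,f)=O(T(r,h)+T(r,k))$, so the single hypothesis $f\in\F$ (or symmetrically $g\in\F$) finishes; no second ``saving'' is combined with anything. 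To repair your draft you would have to replace both the $P/Q$ heuristic and the factorization by auxiliary constructions of this kind.
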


For $m=1,n\geq2$ (or similarly for $m\geq2,n=1$), one easily sees a family of counterexamples as follows.

\begin{example}
Assume $h(z_1,z_2),k(z_1,z_2)$ are meromorphic functions while $p(z_1,z_2)$ is a polynomial in $\C^2$.
Then, the functions
\begin{equation*}
f(z_1,z_2)=\frac{1}{h(z_1,z_2)}-\frac{k(z_1,z_2)}{h(z_1,z_2)}e^{np(z_1,z_2)}\,\,\text{and}\,\,g(z_1,z_2)=e^{p(z_1,z_2)}
\end{equation*}
satisfy $h(z_1,z_2)f+k(z_1,z_2)g^n=1$.
Let, say, $h(z_1,z_2)=k(z_1,z_2)=p(z_1,z_2)=z^2_1+2z_1z_2+z^2_2$ and observe that
\begin{equation*}
\Z\big((hf)_{z_2}/h\big)=\Z\big((kg^n)_{z_1}/(nkg^{n-1})\big)
\end{equation*}
counting multiplicities, but apparently \eqref{Eq1.2} fails.
\end{example}

\begin{corollary}\label{Cor1.3}
All meromorphic solutions $u(z_1,z_2)$ to the general 2d eikonal equations
\begin{equation}\label{Eq1.6}
u^m_{z_1}+u^n_{z_2}=1,
\end{equation}
with $m,n\geq2$, must be affine.
\end{corollary}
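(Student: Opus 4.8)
The plan is to reduce Corollary~\ref{Cor1.3} to Theorem~\ref{Thm1.2} by setting $f:=u_{z_1}$ and $g:=u_{z_2}$, which are meromorphic in $\C^2$ whenever $u$ is. Then \eqref{Eq1.6} becomes exactly $f^m+g^n=1$, that is, \eqref{Eq1.3} with $h\equiv k\equiv1$. The decisive structural fact is the integrability (Schwarz) identity $u_{z_1z_2}=u_{z_2z_1}$, which reads $f_{z_2}=g_{z_1}$ as an identity between meromorphic functions, not merely between their zero sets.

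First I would check that the hypotheses \eqref{Eq1.4}/\eqref{Eq1.5} of Theorem~\ref{Thm1.2} are automatic here. Since $h\equiv k\equiv1$, one computes $(hf^m)_{z_2}/(mhf^{m-1})=(f^m)_{z_2}/(mf^{m-1})=f_{z_2}$ and likewise $(kg^n)_{z_1}/(nkg^{n-1})=g_{z_1}$. By the identity $f_{z_2}=g_{z_1}$ these two functions coincide, so $\Z(f_{z_2})=\Z(g_{z_1})$ holds even counting multiplicities, a fortiori ignoring them. Hence, for $(m,n)\neq(2,2)$, Theorem~\ref{Thm1.2}(A) applies at once; because $h,k$ are constants we have $T(r,h)=T(r,k)=O(1)$, so \eqref{Eq1.2} collapses to $T(r,f)+T(r,g)=O(1)$ outside a set of finite Lebesgue measure. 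As $T(\cdot,\cdot)$ is nondecreasing, this bound holds for all $r$, and a meromorphic function in $\C^2$ of bounded Nevanlinna characteristic is constant; thus $u_{z_1},u_{z_2}$ are constant and $u$ is affine.

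For the remaining case $(m,n)=(2,2)$ the same computation verifies \eqref{Eq1.5}, but Theorem~\ref{Thm1.2}(B) additionally requires $f\in\F$ or $g\in\F$, i.e.\ $\Theta(\infty,u_{z_1})>0$ or $\Theta(\infty,u_{z_2})>0$. This is the main obstacle, and I would settle it through the pole structure forced by $f^2+g^2=1$. If $u$ is entire, then $u_{z_1},u_{z_2}$ are entire, so $\Theta(\infty,\cdot)=1$ and both lie in $\F$. Suppose instead $u$ has a pole of order $p\geq1$ along a hypersurface component $V_0$; then $f,g$ both blow up on $V_0$, and $f^2=1-g^2$ forces them to have the same pole order there. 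Now differentiating in $z_j$ raises the order along $V_0$ (to $p+1$) unless a local defining function of $V_0$ is independent of $z_j$; since no hypersurface has a defining function independent of both $z_1$ and $z_2$, at least one of $u_{z_1},u_{z_2}$ attains order $p+1$ on $V_0$, while the coordinate-independent alternative would make the two orders unequal and is thereby excluded. Equality of the orders thus forces both to equal $p+1\geq2$. Consequently every pole of $f$ has order at least $2$, so $N(r,\infty,f)\geq2\bar N(r,\infty,f)$ and therefore $\Theta(\infty,f)\geq\tfrac12>0$, giving $f\in\F$. Theorem~\ref{Thm1.2}(B) then yields $T(r,f)+T(r,g)=O(1)$ exactly as before, and again $u$ is affine.

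The one delicate point in this scheme is precisely the $\F$-membership for $(2,2)$; everything else is a direct substitution into Theorem~\ref{Thm1.2}. I expect the pole-order analysis to be the crux, since it is where the special geometry of the eikonal relation $f^2+g^2=1$—equal blow-up orders of $f,g$ together with the exclusion of coordinate-degenerate polar varieties—must be exploited. By contrast, the case $(m,n)\neq(2,2)$ requires no such input and is essentially immediate.
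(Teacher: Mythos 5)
Your proposal is correct and takes essentially the paper's own route: the paper obtains Corollary~\ref{Cor1.3} as the special case $h\equiv k\equiv1$ of Theorem~\ref{Thm3.3}, verifying \eqref{Eq1.4}--\eqref{Eq1.5} automatically via $u_{z_1z_2}=u_{z_2z_1}$ and, for $m=n=2$, using exactly your key observation that along any polar hypersurface of $u$ (which cannot be independent of both $z_1$ and $z_2$) the derivatives $u_{z_1},u_{z_2}$ must share the pole order $p+1\geq2$, so all their poles are multiple. The only cosmetic difference is in how this fact is cashed in: the paper concludes that $P_1$ then has no poles coming from $f,g$ (so the deficiency hypothesis is automatic, per the remark closing Section~\ref{PMT}), whereas you quantify it as $\Theta(\infty,u_{z_1})\geq\tfrac{1}{2}$ and invoke Theorem~\ref{Thm1.2}{\bf(B)} verbatim --- the same idea applied one step earlier.
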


\begin{remark}
Corollary \ref{Cor1.3} (also) follows from Li \cite{Li08,LY,Li14,Li18}.
The condition \eqref{Eq1.4} is not entirely new, which was first mentioned in \cite{Li18} and used in \cite{LL} but only for entire solutions to \eqref{Eq1.3} for $m=n=2$; note that the inclusion condition in \cite[Theorem 2.1]{LL} will not lead to its associated conclusions, and the issue comes from the analysis at lines 4-5 of its page 7.
\end{remark}

In Section 2, we prove Theorem \ref{Thm1.2}.
In Section 3, we further discuss Theorem \ref{Thm1.2} {\bf(B)} about the family $\F$, study an extension of Corollary \ref{Cor1.3} on the first order nonlinear partial differential equations $h(z_1)u^m_{z_1}+k(z_2)u^n_{z_2}=1$, and characterize entire solutions to the first order nonlinear partial differential equations $h(z_1)u^m_{z_1}+k(z_2)u_{z_2}=1$ and $h(z_1)u_{z_1}+k(z_2)u^n_{z_2}=1$ for $m,n\geq2$.

To prove our results, we will need Nevanlinna theory; see the classical monograph Stoll \cite{St}.
So, we assume the familiarity with the basics of this theory such as the characteristic function $T(r,f)$, the proximity function $m(r,f)$, the counting functions $N(r,f)$ (counting multiplicities) and $\overline{N}(r,f)$ (ignoring  multiplicities), and the first and second main theorems.
$S(r,f)$ denotes a quantity with $S(r,f)=o(T(r,f))$ as $r\to+\infty$ outside a set of $r$ of finite Lebesgue measure.
Define $\Theta(\infty,f):=1-\limsup\limits_{r\to\infty}\frac{\overline{N}(r,f)}{T(r,f)}$.
Set $\operatorname{div}^0_f(\xi_0)$ to be the multiplicity of $f$ at a zero $\xi_0$, being the smallest degree of the homogeneous polynomials appearing in the Taylor series expansion of $f$ around $\xi_0$; as a common practice, write $\operatorname{div}^0_f(\xi_0):=+\infty$ when $f\equiv0$.

The problems studied in this paper are independent of Navanlinna theory.
So, it is of great interest to reprove the results here without involving this heavy machinery.

\section{Proof of Theorem \ref{Thm1.2}}\label{PMT}
\begin{proof}[Proof of Theorem \ref{Thm1.2} (A)]
Set $U:=hf^m$ and $V:=kg^n$.
Then, it follows from \eqref{Eq1.3} that
\begin{equation}
U+V=1,\nonumber
\end{equation}
and hence one has
\begin{equation}\label{Eq2.1}
U_{z_j}+V_{z_j}=0
\end{equation}
for $j=1,2$.
Noticing
\begin{equation}\label{Eq2.2}
\begin{split}
&U_{z_j}=h_{z_j}f^m+mhf^{m-1}f_{z_j}=f^{m-1}(h_{z_j}f+mhf_{z_j})\,\,\text{and}\\
&V_{z_j}=k_{z_j}g^n+nkg^{n-1}g_{z_j}=g^{n-1}(k_{z_j}g+nkg_{z_j}),
\end{split}
\end{equation}
we define
\begin{equation}\label{Eq2.3}
P:=\frac{F^m}{f^{2m}}\frac{G^n}{g^{2n}}
\end{equation}
for
\begin{equation}
F:=h_{z_2}f+mhf_{z_2}=\frac{U_{z_2}}{f^{m-1}}\,\,\text{and}\,\,G:=k_{z_1}g+nkg_{z_1}=\frac{V_{z_1}}{g^{n-1}}.\nonumber
\end{equation}

Using \eqref{Eq1.3}, we have
\begin{equation}\label{Eq2.4}
mT(r,f)=nT(r,g)+O(T(r,h)+T(r,k)).
\end{equation}
Applying the lemma on logarithmic derivative (see Vitter \cite{Vi}, and Biancofiore and Stoll \cite{BS}), we deduce from \eqref{Eq1.3} and \eqref{Eq2.2}-\eqref{Eq2.4} that
\begin{equation}\label{Eq2.5}
\begin{split}
&\hspace{4.66mm}m(r,P)=m\bigg(r,\frac{F^m}{f^{2m}}\frac{G^n}{g^{2n}}\bigg)=m\bigg(r,\frac{kF^m}{f^m(1-hf^m)}\frac{hG^n}{g^n(1-kg^n)}\bigg)\\
&\leq m\bigg(r,\frac{F^m}{f^m(1-hf^m)}\bigg)+m\bigg(r,\frac{G^n}{g^n(1-kg^n)}\bigg)+T(r,h)+T(r,k)+O(1)\\
&=m\bigg(r,\frac{F^m}{f^m}+\frac{hF^m}{1-hf^m}\bigg)+m\bigg(r,\frac{G^n}{g^n}+\frac{kG^n}{1-kg^n}\bigg)+T(r,h)+T(r,k)+O(1)\\
&=m\bigg(r,\frac{F^m}{f^m}\bigg)+m\bigg(r,\frac{hF^m}{1-hf^m}\bigg)+m\bigg(r,\frac{G^n}{g^n}\bigg)+m\bigg(r,\frac{kG^n}{1-kg^n}\bigg)\\
&\quad+T(r,h)+T(r,k)+O(1)\\
&\leq m\bigg\{m\bigg(r,\frac{f_{z_2}}{f}\bigg)+T(r,h)+T(r,h_{z_2})\bigg\}+m\bigg(r,\frac{F^m}{(1-hf^m)_{z_2}}\frac{(1-hf^m)_{z_2}}{1-hf^m}\bigg)\\
&\quad+n\bigg\{m\bigg(r,\frac{g_{z_1}}{g}\bigg)+T(r,k)+T(r,k_{z_1})\bigg\}+m\bigg(r,\frac{G^n}{(1-kg^n)_{z_1}}\frac{(1-kg^n)_{z_1}}{1-kg^n}\bigg)\\
&\quad+O(T(r,h)+T(r,k))\\
&\leq m\bigg(r,\frac{F^m}{U_{z_2}}\frac{U_{z_2}}{1-U}\bigg)+m\bigg(r,\frac{G^n}{V_{z_1}}\frac{V_{z_1}}{1-V}\bigg)\\
&\quad+S(r,f)+S(r,g)+O(T(r,h)+T(r,k))\\
&=m\bigg(r,\frac{F^{m-1}}{f^{m-1}}\bigg)+m\bigg(r,\frac{U_{z_2}}{1-U}\bigg)+m\bigg(r,\frac{G^{n-1}}{g^{n-1}}\bigg)+m\bigg(r,\frac{V_{z_1}}{1-V}\bigg)\\
&\quad+S(r,f)+S(r,g)+O(T(r,h)+T(r,k))\\
&\leq(m-1)\bigg\{m\bigg(r,\frac{f_{z_2}}{f}\bigg)+T(r,h)+T(r,h_{z_2})\bigg\}+S(r,U)\\
&\quad+(n-1)\bigg\{m\bigg(r,\frac{g_{z_1}}{g}\bigg)+T(r,k)+T(r,k_{z_1})\bigg\}+S(r,V)\\
&\quad+S(r,f)+S(r,g)+O(T(r,h)+T(r,k))\\
&=S(r,f)+S(r,g)+O(T(r,h)+T(r,k)),
\end{split}
\end{equation}
as clearly $T(r,U)=O(T(r,f)+T(r,h))$ and $T(r,V)=O(T(r,g)+T(r,k))$.

{\bf Claim 1.} $P$ is holomorphic at each pole of $f$ or $g$, which is neither a zero nor a pole of $h,k$ and their partial derivatives.
To prove this claim, we rewrite $P$ as
\begin{equation}
\begin{split}
P&=\frac{F^m}{f^{2m}}\frac{G^n}{g^{2n}}=\bigg(\frac{h_{z_2}}{f}+mh\frac{f_{z_2}}{f^2}\bigg)^m\bigg(\frac{k_{z_1}}{g}+nk\frac{g_{z_1}}{g^2}\bigg)^n\\
&=(h_{z_2}\tilde{f}-mh\tilde{f}_{z_2})^m(k_{z_1}\tilde{g}-nk\tilde{g}_{z_1})^n\nonumber
\end{split}
\end{equation}
for $\tilde{f}:=\frac{1}{f}$ and $\tilde{g}:=\frac{1}{g}$.
We observe from \eqref{Eq1.3} that a pole $\xi_\infty$ of $f$ with multiplicity $l$, which is neither a zero nor a pole of $h,k$ and their partial derivatives, is also a pole of $g$ with multiplicity $ml/n\geq1$, an integer.
Thus, we have
\begin{equation}\label{Eq2.6}
\operatorname{div}^0_P(\xi_\infty)\geq m(l-1)+n\big(ml/n-1\big)=2ml-m-n\geq0,
\end{equation}
so that Claim 1 follows from \eqref{Eq2.6} and a symmetric analysis on poles of $g$.

{\bf Claim 2.} At each zero of $f$ or $g$, which is neither a zero nor a pole of $h,k$ and their partial derivatives, we have
\begin{equation}
\operatorname{div}^0_P(\xi_0)\geq1.\nonumber
\end{equation}
We shall only discuss on a zero $\xi_0$ of $f$ and consider two cases separately.

{\bf(1)} When $\operatorname{div}^0_f(\xi_0)=1$, then we know from the first equation in \eqref{Eq2.2} that $U_{z_j}(\xi_0)=0$ with multiplicity at least $m-1$, which immediately leads to $V_{z_j}(\xi_0)=0$ with multiplicity at least $m-1$ by \eqref{Eq2.1}.
Noticing $g(\xi_0)\neq0$ by \eqref{Eq1.3} and recalling the hypothesis $\Z\big(U_{z_2}/(mhf^{m-1})\big)=\Z\big(V_{z_1}/(nkg^{n-1})\big)$ ignoring multiplicities, one derives that $\xi_0$ must also be a zero of $U_{z_2}/f^{m-1}$; that is, $\xi_0$ is a zero of $F$ by definition.
In view of \eqref{Eq2.3}, we have
\begin{equation}\label{Eq2.7}
\begin{split}
\operatorname{div}^0_P(\xi_0)&=\operatorname{div}^0_{F^m}(\xi_0)+\operatorname{div}^0_{G^n}(\xi_0)-2\operatorname{div}^0_{f^m}(\xi_0)\\
&=m\operatorname{div}^0_{\frac{U_{z_2}}{f^{m-1}}}(\xi_0)+n\operatorname{div}^0_{V_{z_1}}(\xi_0)-2m\operatorname{div}^0_f(\xi_0)\\
&\geq m+n(m-1)-2m.
\end{split}
\end{equation}
When $n=2$, then our assumptions $m\geq2$ and $(m,n)\neq(2,2)$ yield $m\geq3$; so, \eqref{Eq2.7} implies
\begin{equation}
\operatorname{div}^0_P(\xi_0)\geq2(m-1)-m=m-2\geq1.\nonumber
\end{equation}
When $n\geq3$, then $m\geq2$ and it follows from \eqref{Eq2.7} that
\begin{equation}
\operatorname{div}^0_P(\xi_0)\geq3(m-1)-m=2m-3\geq1.\nonumber
\end{equation}

{\bf (2)} When $\operatorname{div}^0_f(\xi_0)=l\geq2$, then we derive from the first equation in \eqref{Eq2.2} that $U_{z_j}(\xi_0)=0$ with multiplicity at least $(m-1)l+l-1=ml-1$, which immediately yields $V_{z_j}(\xi_0)=0$ with multiplicity at least $ml-1$ by \eqref{Eq2.1}.
As $g(\xi_0)\neq0$ by \eqref{Eq1.3}, we see from \eqref{Eq2.3} that
\begin{equation}\label{Eq2.8}
\begin{split}
\operatorname{div}^0_P(\xi_0)&=\operatorname{div}^0_{F^m}(\xi_0)+\operatorname{div}^0_{G^n}(\xi_0)-2\operatorname{div}^0_{f^m}(\xi_0)\\
&=m\operatorname{div}^0_{\frac{U_{z_2}}{f^{m-1}}}(\xi_0)+n\operatorname{div}^0_{V_{z_1}}(\xi_0)-2m\operatorname{div}^0_f(\xi_0)\\
&\geq m(l-1)+n(ml-1)-2ml.
\end{split}
\end{equation}
When $n=2$, then our assumptions $m\geq2$ and $(m,n)\neq(2,2)$ yield $m\geq3$; so, \eqref{Eq2.8} implies
\begin{equation}
\operatorname{div}^0_P(\xi_0)\geq2(ml-1)-ml-m=ml-m-2\geq m(l-1)-2\geq1.\nonumber
\end{equation}
When $n\geq3$, then $m\geq2$ and it follows from \eqref{Eq2.8} that
\begin{equation}
\operatorname{div}^0_P(\xi_0)\geq3(ml-1)-ml-m=2ml-m-3\geq m(2l-1)-3\geq3.\nonumber
\end{equation}

So, Claim 2 follows from the preceding estimates and a symmetric analysis on zeros of $g$.

By virtue of \eqref{Eq2.3} and Claims 1-2, it is easy to deduce that
\begin{equation}
N(r,P)=O\bigg(N(r,h)+N\Big(r,\frac{1}{h}\Big)+N(r,k)+N\Big(r,\frac{1}{k}\Big)\bigg)\leq O(T(r,h)+T(r,k)),\nonumber
\end{equation}
which combined with \eqref{Eq2.5} leads to
\begin{equation}\label{Eq2.9}
T(r,P)\leq S(r,f)+S(r,g)+O(T(r,h)+T(r,k)).
\end{equation}

When $P\equiv0$, then we know from \eqref{Eq2.3} that $FG\equiv0$; therefore, either $F\equiv0$ or $G\equiv0$.
Our hypothesis $\Z\big(U_{z_2}/(mhf^{m-1})\big)=\Z\big(V_{z_1}/(nkg^{n-1})\big)$ yields $F\equiv G\equiv0$.
Thus, $U_{z_1},U_{z_2},V_{z_1},V_{z_2}$ must all be identically equal to zero by \eqref{Eq2.1}, so that $U=hf^m$ and $V=kg^n$ both are constant, which immediately leads to the conclusion of Theorem \ref{Thm1.2}.

When $P\not\equiv0$, then \eqref{Eq1.3} and Nevanlinna's first and second main theorems yield
\begin{equation}\label{Eq2.10}
\begin{split}
&\hspace{4.6mm}mT(r,f)=T(r,f^m)+O(1)\leq T(r,hf^m)+T(r,h)\\
&\leq\overline{N}(r,hf^m)+\overline{N}\Big(r,\frac{1}{hf^m}\Big)+\overline{N}\Big(r,\frac{1}{hf^m-1}\Big)+S(r,f)+T(r,h)\\
&\leq\overline{N}(r,f)+\overline{N}\Big(r,\frac{1}{f}\Big)+\overline{N}\Big(r,\frac{1}{kg^n}\Big)+S(r,f)+O(T(r,h))\\
&\leq\overline{N}(r,f)+\overline{N}\Big(r,\frac{1}{f}\Big)+\overline{N}\Big(r,\frac{1}{g}\Big)+S(r,f)+O(T(r,h)+T(r,k)),
\end{split}
\end{equation}
and similarly
\begin{equation}\label{Eq2.11}
nT(r,g)\leq\overline{N}(r,g)+\overline{N}\Big(r,\frac{1}{g}\Big)+\overline{N}\Big(r,\frac{1}{f}\Big)+S(r,g)+O(T(r,h)+T(r,k)).
\end{equation}

From Claim 2, we know from \eqref{Eq2.4} and \eqref{Eq2.9}-\eqref{Eq2.11} that
\begin{equation}
\begin{split}
&\hspace{4.6mm}2mT(r,f)=mT(r,f)+nT(r,g)+O(T(r,h)+T(r,k))\\
&\leq\overline{N}(r,f)+\overline{N}(r,g)+2\overline{N}\Big(r,\frac{1}{g}\Big)+2\overline{N}\Big(r,\frac{1}{f}\Big)+S(r,f)+S(r,g)+O(T(r,h)+T(r,k))\\
&\leq2\overline{N}(r,f)+4\overline{N}\Big(r,\frac{1}{P}\Big)+S(r,f)+S(r,g)+O(T(r,h)+T(r,k))\\
&\leq2\overline{N}(r,f)+4T(r,P)+S(r,f)+S(r,g)+O(T(r,h)+T(r,k))\\
&\leq2T(r,f)+S(r,f)+S(r,g)+O(T(r,h)+T(r,k)),\nonumber
\end{split}
\end{equation}
which together with the assumption $m\geq2$ yields the conclusion of Theorem \ref{Thm1.2}.
\end{proof}

\begin{proof}[Proof of Theorem \ref{Thm1.2} (B)]
Set $U=hf^2$ and $V=kg^2$ to have \eqref{Eq2.1}-\eqref{Eq2.2} as before.
Write
\begin{equation}\label{Eq2.12}
P_1:=\frac{F^2}{f^2}\frac{G^2}{g^4}=k\bigg(\frac{F^2}{f^2}-\frac{hF^2}{1-hf^2}\bigg)\frac{G^2}{g^2}
\end{equation}
for $F=h_{z_2}f+2hf_{z_2}=U_{z_2}/f$ and $G=k_{z_1}g+2kg_{z_1}=V_{z_1}/g$ as before.

Now, utilize \eqref{Eq1.3} to see
\begin{equation}\label{Eq2.13}
T(r,f)=T(r,g)+O(T(r,h)+T(r,k)),
\end{equation}
and apply the lemma on logarithmic derivative again, like in \eqref{Eq2.5}, to deduce that
\begin{equation}\label{Eq2.14}
m(r,P_1)=S(r,f)+S(r,g)+O(T(r,h)+T(r,k)).
\end{equation}

{\bf Claim 1.} $P_1$ has a pole at each pole of $f$ that is neither a zero nor a pole of $h,k$ and their partial derivatives, with multiplicity at most $2$.
To prove this claim, rewrite $P_1$ as
\begin{equation}
\begin{split}
P_1&=\frac{F^2}{f^2}\frac{G^2}{g^4}=\bigg(h_{z_2}+2h\frac{f_{z_2}}{f}\bigg)^2\bigg(\frac{k_{z_1}}{g}+2k\frac{g_{z_1}}{g^2}\bigg)^2\\
&=\frac{1}{\tilde{f}^2}(h_{z_2}\tilde{f}-2h\tilde{f}_{z_2})^2(k_{z_1}\tilde{g}-2k\tilde{g}_{z_1})^2\nonumber
\end{split}
\end{equation}
for $\tilde{f}=\frac{1}{f}$ and $\tilde{g}=\frac{1}{g}$.
We observe from \eqref{Eq1.3} that a pole $\xi_\infty$ of $f$ with multiplicity $l$, which is neither a zero nor a pole of $h,k$ and their partial derivatives, is also a pole of $g$ with the same multiplicity $l$.
Thus, we have $\operatorname{div}^0_{\tilde{f}^2P_1}(\xi_\infty)\geq4(l-1)\geq0$, so that $\operatorname{div}^0_{P_1}(\xi_\infty)\geq4(l-1)-2l=2(l-2)\geq0$ if $l\geq2$; if $l=1$, then one has $\operatorname{div}^0_{\frac{1}{P_1}}(\xi_\infty)\leq\operatorname{div}^0_{\tilde{f}^2}(\xi_\infty)=2$.

Because poles of $f$ and $g$ appear simultaneously, Claim 1 is thus proved.

{\bf Claim 2.} We have
\begin{equation}\label{Eq2.15}
\operatorname{div}^0_{P_1}(\xi_0)\geq2l
\end{equation}
at each zero of $f$ with multiplicity $l$, which is neither a zero nor a pole of $h,k$ and their partial derivatives.
We consider two cases separately.

{\bf(1)} When $\operatorname{div}^0_f(\xi_0)=1$, then we know from the first equation in \eqref{Eq2.2} that $U_{z_j}(\xi_0)=0$ with multiplicity at least $1$, which then yields $V_{z_j}(\xi_0)=0$ with multiplicity at least $1$ by \eqref{Eq2.1}.
Since $g(\xi_0)\neq0$ via \eqref{Eq1.3} and $\Z\big(U_{z_2}/f\big)=\Z\big(V_{z_1}/g\big)$ ignoring multiplicities via \eqref{Eq1.5}, $\xi_0$ is a zero of $F$ by definition.
In view of \eqref{Eq2.12}, we have
\begin{equation}
\begin{split}
\operatorname{div}^0_{P_1}(\xi_0)&=\operatorname{div}^0_{F^2}(\xi_0)+\operatorname{div}^0_{G^2}(\xi_0)-\operatorname{div}^0_{f^2}(\xi_0)\\
&=2\operatorname{div}^0_{\frac{U_{z_2}}{f}}(\xi_0)+2\operatorname{div}^0_{V_{z_1}}(\xi_0)-2\operatorname{div}^0_f(\xi_0)\geq2.\nonumber
\end{split}
\end{equation}

{\bf (2)} When $\operatorname{div}^0_f(\xi_0)=l\geq2$, then we derive from the first equation in \eqref{Eq2.2} that $U_{z_j}(\xi_0)=0$ with multiplicity at least $2l-1$, which immediately yields $V_{z_j}(\xi_0)=0$ with multiplicity at least $2l-1$ by \eqref{Eq2.1}.
As $g(\xi_0)\neq0$ by \eqref{Eq1.3}, we see from \eqref{Eq2.12} that
\begin{equation}
\begin{split}
\operatorname{div}^0_{P_1}(\xi_0)&=\operatorname{div}^0_{F^2}(\xi_0)+\operatorname{div}^0_{G^2}(\xi_0)-\operatorname{div}^0_{f^2}(\xi_0)\\
&=2\operatorname{div}^0_{\frac{U_{z_2}}{f}}(\xi_0)+2\operatorname{div}^0_{V_{z_1}}(\xi_0)-2\operatorname{div}^0_f(\xi_0)\\
&\geq2(l-1)+2(2l-1)-2l=2l+2(l-2)\geq2l.\nonumber
\end{split}
\end{equation}

{\bf Claim 3.} We have $\operatorname{div}^0_{P_1}(\xi_0)\geq0$ at each zero of $g$ that is neither a zero nor a pole of $h,k$ and their partial derivatives.
We consider two cases separately.

{\bf(1)} When $\operatorname{div}^0_g(\xi_0)=1$, then we know from the first equation in \eqref{Eq2.2} that $V_{z_j}(\xi_0)=0$ with multiplicity at least $1$, which then yields $U_{z_j}(\xi_0)=0$ with multiplicity at least $1$ by \eqref{Eq2.1}.
Since $f(\xi_0)\neq0$ via \eqref{Eq1.3} and $\Z\big(U_{z_2}/f\big)=\Z\big(V_{z_1}/g\big)$ ignoring multiplicities via \eqref{Eq1.5}, $\xi_0$ is a zero of $G$ by definition.
In view of \eqref{Eq2.12}, we have
\begin{equation}
\begin{split}
\operatorname{div}^0_{P_1}(\xi_0)&=\operatorname{div}^0_{F^2}(\xi_0)+\operatorname{div}^0_{G^2}(\xi_0)-2\operatorname{div}^0_{g^2}(\xi_0)\\
&=2\operatorname{div}^0_{U_{z_2}}(\xi_0)+2\operatorname{div}^0_{\frac{V_{z_1}}{g}}(\xi_0)-4\operatorname{div}^0_g(\xi_0)\geq0.\nonumber
\end{split}
\end{equation}

{\bf (2)} When $\operatorname{div}^0_g(\xi_0)=l\geq2$, then we derive from the first equation in \eqref{Eq2.2} that $V_{z_j}(\xi_0)=0$ with multiplicity at least $2l-1$, which immediately yields $U_{z_j}(\xi_0)=0$ with multiplicity at least $2l-1$ by \eqref{Eq2.1}.
As $f(\xi_0)\neq0$ by \eqref{Eq1.3}, we see from \eqref{Eq2.12} that
\begin{equation}
\begin{split}
\operatorname{div}^0_{P_1}(\xi_0)&=\operatorname{div}^0_{F^2}(\xi_0)+\operatorname{div}^0_{G^2}(\xi_0)-2\operatorname{div}^0_{g^2}(\xi_0)\\
&=2\operatorname{div}^0_{U_{z_2}}(\xi_0)+2\operatorname{div}^0_{\frac{V_{z_1}}{g}}(\xi_0)-4\operatorname{div}^0_g(\xi_0)\\
&\geq2(2l-1)+2(l-1)-4l=2(l-2)\geq0.\nonumber
\end{split}
\end{equation}

By virtue of \eqref{Eq2.12} and Claims 1-3, it is easily deduced that
\begin{equation}\label{Eq2.16}
N(r,P_1)\leq2\overline{N}(r,f)+O(T(r,h)+T(r,k)).
\end{equation}

When $P_1\equiv0$, then we observe from \eqref{Eq2.12} that $FG\equiv0$ again.
So, $U=hf^2$ and $V=kg^2$ are both constant as before, which yields the conclusion of Theorem \ref{Thm1.2}.

When $P_1\not\equiv0$, then we have from the analysis in \eqref{Eq2.5} and \eqref{Eq2.13}-\eqref{Eq2.16} that
\begin{equation}
\begin{split}
&\hspace{4.6mm}2T(r,f)=T\Big(r,\frac{1}{f^2}\Big)+O(1)=m\Big(r,\frac{1}{f^2}\Big)+2N\Big(r,\frac{1}{f}\Big)+O(1)\\
&\leq m\Big(r,\frac{P_1}{f^2}\Big)+m\Big(r,\frac{1}{P_1}\Big)+N\Big(r,\frac{1}{P_1}\Big)+O(T(r,h)+T(r,k))\\
&\leq T\Big(r,\frac{1}{P_1}\Big)+S(r,f)+S(r,g)+O(T(r,h)+T(r,k))\\
&=T(r,P_1)+S(r,f)+S(r,g)+O(T(r,h)+T(r,k))\\
&=m(r,P_1)+N(r,P_1)+S(r,f)+S(r,g)+O(T(r,h)+T(r,k))\\
&\leq2\overline{N}(r,f)+S(r,f)+S(r,g)+O(T(r,h)+T(r,k)),\nonumber
\end{split}
\end{equation}
since $P=\frac{P_1}{f^2}$ in view of \eqref{Eq2.3} with $m=n=2$ and \eqref{Eq2.12}.
That is,
\begin{equation}\label{Eq2.17}
\begin{split}
&\hspace{5mm}T(r,f)\leq\overline{N}(r,f)+S(r,f)+S(r,g)+O(T(r,h)+T(r,k))\\
&\leq(1-\Theta(\infty,f))T(r,f)+S(r,f)+S(r,g)+O(T(r,h)+T(r,k));
\end{split}
\end{equation}
in other words, recalling $\Theta(\infty,f)\leq1$, we have
\begin{equation}\label{Eq2.18}
\Theta(\infty,f)T(r,f)=O(T(r,h)+T(r,k)),
\end{equation}
which along with the condition $\Theta(\infty,f)>0$ yields the conclusion of Theorem \ref{Thm1.2}.
\end{proof}

\begin{remark}
A careful check of the analysis in Claim 1 of the proof of Theorem \ref{Thm1.2} {\bf(B)} indicates that one may only consider the simple poles of $f$, and thus weaken the condition $\Theta(\infty,f)>0$ and enlarge the family $\F$.
On the other hand, as equation \eqref{Eq1.3} with $m=n$ yields the estimate \eqref{Eq2.13} as well as
\begin{equation}
\overline{N}(r,f)=\overline{N}(r,g)+O(T(r,h)+T(r,k)),\nonumber
\end{equation}
one can alternatively choose $g\in\F$ in place of $f\in\F$ in Theorem \ref{Thm1.2} {\bf(B)}.
\end{remark}

\section{Some extensions}\label{Ext}
First, recall a constraint on the poles of $f$ (and automatically on those of $g$) was utilized to have the estimate \eqref{Eq1.2} in Theorem \ref{Thm1.2} {\bf(B)}.
Now, we would like to replace this constraint by a similar one.
Denote by $\Fa$ the set of meromorphic functions $f$ in $\C^2$ satisfying $\Theta(a,f)>0$ for $\Theta(a,f):=1-\limsup\limits_{r\to\infty}\frac{\overline{N}\big(r,\frac{1}{f-a}\big)}{T(r,f^2)}$.
All entire functions are members of $\F_\infty=\F$.

Our first extension result is as follows regarding meromorphic solutions to
\begin{equation}\label{Eq3.1}
h(z_1)f^2+k(z_1,z_2)g^2=1.
\end{equation}

\begin{theorem}\label{Thm3.1}
Assume that $h(z_1)\not\equiv0$ is a meromorphic function in $\C$, and $k(z_1,z_2)\not\equiv0$ is a meromorphic function in $\C^2$.
Then, all meromorphic solutions $f(z_1,z_2),g(z_1,z_2)$ to equation \eqref{Eq3.1} in $\C^2$ satisfy \eqref{Eq1.2} outside a set of $r$ of finite Lebesgue measure, provided
\begin{equation}
\Z(f_{z_2})=\Z\big((kg^2)_{z_1}/(2kg)\big)\nonumber
\end{equation}
ignoring multiplicities and $f\in\Fa$ for a finite number $a\neq0$ with $a^2h\not\equiv1$.
\end{theorem}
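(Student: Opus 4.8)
The plan is to run the argument of Theorem \ref{Thm1.2} \textbf{(B)} with the value $\infty$ of $f$ systematically replaced by the finite value $a$, exploiting that $h=h(z_1)$ forces $h_{z_2}\equiv0$. First I would set $U:=hf^2$, $V:=kg^2$, so that \eqref{Eq2.1}--\eqref{Eq2.2} hold with $F=h_{z_2}f+2hf_{z_2}=2hf_{z_2}=U_{z_2}/f$ and $G=k_{z_1}g+2kg_{z_1}=V_{z_1}/g$; the hypothesis then reads $\Z(F/(2h))=\Z(G/(2k))$ ignoring multiplicities, since $(kg^2)_{z_1}/(2kg)=G/(2k)$ and $f_{z_2}=F/(2h)$. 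Two genericity facts replace the role played in \textbf{(B)} by ``$g\ne0$ at the zeros of $f$'': because $a\ne0$ the $a$-points of $f$ are disjoint from its zeros, and because $a^2h\not\equiv1$ one has $kg^2=1-a^2h\not\equiv0$ at an $a$-point, so $g\ne0$ there off a set whose counting function is $O(T(r,h))$. As $\Theta(a,f)$ is normalized by $T(r,f^2)=2T(r,f)$, the target is $2T(r,f)\le\overline N\big(r,1/(f-a)\big)+S(r,f)+S(r,g)+O(T(r,h)+T(r,k))$, which together with $\Theta(a,f)>0$ closes exactly as \eqref{Eq2.17}--\eqref{Eq2.18}.

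The decisive new device is the logarithmic derivative $\dfrac{f_{z_2}}{f-a}=\dfrac{F}{2h(f-a)}$, and this is precisely where $h=h(z_1)$ is indispensable: if $h$ depended on $z_2$, the extra term $h_{z_2}f/(f-a)$ would have proximity comparable to $\overline N(r,1/(f-a))$ and the scheme would collapse (cf. the \emph{Example} following Theorem \ref{Thm1.1}). Accordingly I would introduce an auxiliary function $P_2$ modelled on $P_1$ of \eqref{Eq2.12} but carrying a factor attached to $f-a$, so that its poles sit at the $a$-points rather than at the poles of $f$; the partial-fraction identity behind \eqref{Eq2.12}, the relation $1-hf^2=kg^2$, and the logarithmic derivative lemma (as in \eqref{Eq2.5}) should give $m(r,P_2)=S(r,f)+S(r,g)+O(T(r,h)+T(r,k))$ and likewise $m(r,P_2/f^2)=S+O$.

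The heart is then a divisor analysis in the spirit of Claims 1--3 of \textbf{(B)}. At a simple $a$-point I want $P_2$ to have a pole of order \emph{exactly one}: this is what produces the coefficient $1$ in front of $\overline N(r,1/(f-a))$, and it is exactly this coefficient (matched against the $T(r,f^2)=2T(r,f)$ in the definition of $\Theta(a,f)$) that lets $\Theta(a,f)>0$ — rather than $>1/2$ — suffice. At a zero of $f$, the hypothesis forces $F$ to vanish while $V_{z_1}$ vanishes to order $\ge 2\operatorname{div}^0_f-1$, so $P_2$ should acquire a zero of order $\ge 2\operatorname{div}^0_f$; at a zero of $g$, the hypothesis forces $G$ to vanish, keeping $P_2$ holomorphic (the analogue of Claim 3). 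Feeding these into $2T(r,f)=T(r,1/f^2)=m(r,1/f^2)+2N(r,1/f)+O(1)$ and transcribing the chain of \textbf{(B)} would deliver the target estimate.

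The main obstacle — indeed the only genuinely new one — is the behaviour of $P_2$ at the \emph{poles of $f$}. In \textbf{(B)} the weight $F^2/f^2$ forces a pole of order $2$ at each simple pole of $f$, so $N(r,P_1)\le 2\overline N(r,f)$, and this term is eliminated only at the very end via $\Theta(\infty,f)>0$; here no deficiency at $\infty$ is available, so $\overline N(r,f)$ must vanish from the right-hand side \emph{before} invoking $\Theta(a,f)$ (a coefficient $c\,\overline N(r,f)$ with $c>0$ cannot be absorbed by $\Theta(a,f)>0$ alone). The tension is structural: the symmetric weight $F^2/f^4\cdot G^2/g^4$, which as in Claim 1 of the proof of \textbf{(A)} is holomorphic at every pole of $f$, fails for $m=n=2$ to vanish at the zeros of $f$, while the weight $F^2/f^2\cdot G^2/g^4$ that absorbs those zeros reinstates the poles; and no factor of small proximity can vanish to order $2$ at a simple pole of $f$ without producing a pole at the zeros of $f$ or of $g$. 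I therefore expect the crux to be the construction of a single $P_2$ that is simultaneously holomorphic at the poles of $f$ and at the zeros of $g$, carries simple poles at the $a$-points, and still absorbs the zeros of $f$ with small proximity — most plausibly by coupling the above with the second main theorem applied to $U=hf^2$ against the moving target $a^2h$, whose $(a^2h)$-points are exactly the $\pm a$-points of $f$, so as to trade the poles of $f$ for the deficient $a$-points without ever passing through $\overline N(r,f)$. Once the poles of $f$ are neutralized, the remaining estimates are routine adaptations of Section \ref{PMT}.
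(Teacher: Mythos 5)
Your skeleton does match the paper's proof — set $U=hf^2$, $V=kg^2$, exploit $h_{z_2}\equiv0$ so that $F=2hf_{z_2}=U_{z_2}/f$, build an auxiliary function whose poles sit at the $a$-points of $f$ instead of at its poles, estimate its proximity by partial fractions plus the logarithmic derivative lemma, and close via $\Theta(a,f)>0$ as in \eqref{Eq2.17}--\eqref{Eq2.18}. But you stop exactly at what you yourself call the crux, and your structural claim that ``no factor of small proximity can vanish to order $2$ at a simple pole of $f$ without producing a pole at the zeros of $f$ or of $g$'' is a red herring that leads you to abandon the direct construction for an unexecuted second-main-theorem-with-moving-target device. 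The paper simply takes
\begin{equation*}
H_a:=\frac{F^2}{f^2(f^2-a^2)}\,\frac{G^2}{g^4}=\frac{P_1}{f^2-a^2},
\end{equation*}
i.e.\ it divides $P_1$ of \eqref{Eq2.12} by $f^2-a^2$ — crucially by $f^2-a^2$, not by $f-a$ as your sketch suggests. At a pole $\xi_\infty$ of $f$ of multiplicity $l$ (hence of $g$ with the same multiplicity, by \eqref{Eq3.1}), writing $\tilde f=1/f$, $\tilde g=1/g$,
\begin{equation*}
H_a=\frac{4h^2\tilde f_{z_2}^2}{1-a^2\tilde f^2}\,\bigl(k_{z_1}\tilde g-2k\tilde g_{z_1}\bigr)^2,
\end{equation*}
so $\operatorname{div}^0_{H_a}(\xi_\infty)\ge4(l-1)\ge0$: the poles of $f$ are neutralized outright, with no moving targets. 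The point you missed is that the small-proximity requirement applies to the assembled function, not to the extra factor in isolation: $1/(f^2-a^2)$ is not small, but $m(r,H_a)=S(r,f)+S(r,g)+O(T(r,h)+T(r,k))$ follows from the partial-fraction identity \eqref{Eq3.2}, because $F=2hf_{z_2}$ turns each piece into logarithmic-derivative type, e.g.\ $\frac{F^2}{f^2-a^2}=4h^2\frac{f_{z_2}}{f-a}\frac{f_{z_2}}{f+a}$ (this is precisely where $h=h(z_1)$ is indispensable, as you correctly observed); and the resulting simple poles of $H_a$ at the simple $\pm a$-points of $f$ are the goal, not a defect.

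Two further corrections. First, dividing by $f-a$ alone fails: at a simple pole of $f$, $P_1$ has a pole of order up to $2$ while $1/(f-a)$ supplies only one zero, so a term $c\,\overline N(r,f)$ with $c>0$ survives — presumably the source of your pessimism. The symmetric factor costs poles at the $(f+a)$-points as well (Claim 4 of the paper's proof), so what one actually gets is $2T(r,f)\le\overline N\bigl(r,\frac{1}{f-a}\bigr)+\overline N\bigl(r,\frac{1}{f+a}\bigr)+S(r,f)+S(r,g)+O(T(r,h)+T(r,k))$, with the second term absorbed by the trivial bound $\overline N\bigl(r,\frac{1}{f+a}\bigr)\le T(r,f)+O(1)$; your stronger target with only $\overline N\bigl(r,\frac{1}{f-a}\bigr)$ on the right is neither achieved by this construction nor needed. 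Second, the hypothesis $a^2h\not\equiv1$ enters in the divisor analysis, not in your proximity remark about $kg^2$ at $a$-points: at a zero of $g$, $H_a$ stays holomorphic only if $f^2\ne a^2$ there, and by \eqref{Eq3.1} a common zero of $g$ and $f^2-a^2$ is a zero of $h-a^{-2}$, whose counting function is $O(T(r,h))$ precisely because $a^2h\not\equiv1$. With $H_a$ in hand, the rest of your outline (zeros of $f$ giving $\operatorname{div}^0_{H_a}\ge2l$, zeros of $g$ giving $\operatorname{div}^0_{H_a}\ge0$, and the closing chain via $\Theta(a,f)>0$) is exactly the paper's argument; the one genuinely new step is the one you declared structurally blocked, and it is closed by the explicit choice $H_a=P_1/(f^2-a^2)$.
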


We  only outline the key steps of its proof, which resembles that of Theorem \ref{Thm1.2}.

\begin{proof}
Set $U=hf^2$ and $V=kg^2$ to have \eqref{Eq2.1}-\eqref{Eq2.2} as before.
Write
\begin{equation}\label{Eq3.2}
\begin{split}
&\hspace{4.68mm}H_a:=\frac{F^2}{f^2(f^2-a^2)}\frac{G^2}{g^4}=\frac{kF^2}{f^2(f^2-a^2)(1-hf^2)}\frac{G^2}{g^2}\\
&=k\bigg(\frac{1}{a^2(1-a^2h)}\frac{F^2}{f^2-a^2}-\frac{1}{a^2}\frac{F^2}{f^2}+\frac{h^2}{1-a^2h}\frac{F^2}{1-hf^2}\bigg)\frac{G^2}{g^2}
\end{split}
\end{equation}
for $F:=2hf_{z_2}=U_{z_2}/f$ and $G=k_{z_1}g+2kg_{z_1}=V_{z_1}/g$ like before.

We still have \eqref{Eq2.13}, and we apply the lemma on logarithmic derivative to get
\begin{equation}\label{Eq3.3}
m(r,H_a)=S(r,f)+S(r,g)+O(T(r,h)+T(r,k)).
\end{equation}
In view of $H_a=\frac{P_1}{f^2-a^2}$ by \eqref{Eq2.12} and \eqref{Eq3.2}, it follows that
\vskip 2pt
{\bf Claim 1.} $\operatorname{div}^0_{H_a}(\xi_0)\geq2l$ at each zero of $f$ with multiplicity $l$, which is neither a zero nor a pole of $h,k$ and their partial derivatives;
\vskip 2pt
{\bf Claim 2.} $\operatorname{div}^0_{H_a}(\xi_0)\geq0$ at each zero of $g$, which is not a zero of $f^2-a^2$, or equivalently not a zero of $h-a^{-2}$, and which is neither a zero nor a pole of $h,k$ and their partial derivatives.

In the sequel, we will consider the poles of $H_a$.

{\bf Claim 3.} $H_a$ is holomorphic at each pole of $f$ or $g$ that is neither a zero nor a pole of $h,k$ and their partial derivatives.
To prove this claim, rewrite $H_a$ as
\begin{equation}
H_a=\frac{F^2}{f^2(f^2-a^2)}\frac{G^2}{g^4}=\frac{4h^2\tilde{f}^2_{z_2}}{1-a^2\tilde{f}^2}(k_{z_1}\tilde{g}-2k\tilde{g}_{z_1})^2\nonumber
\end{equation}
for $\tilde{f}=\frac{1}{f}$ and $\tilde{g}=\frac{1}{g}$.
We observe from \eqref{Eq3.1} that a pole $\xi_\infty$ of $f$ with multiplicity $l$, which is neither a zero nor a pole of $h,k$ and their partial derivatives, is also a pole of $g$ with the same multiplicity $l$.
Therefore, we have $\operatorname{div}^0_{H_a}(\xi_\infty)\geq4(l-1)\geq0$.

Because poles of $f$ and $g$ appear simultaneously, Claim 3 is thus proved.

{\bf Claim 4.} $H_a$ has a pole at each zero of $f-a$, which is not a zero of $g$, and which is neither a zero nor a pole of $h,k$ and their partial derivatives, with multiplicity at most $1$.
To have this proved, we first note such a common zero of $f-a$ and $g$ is automatically a zero of $h-a^{-2}$ by \eqref{Eq3.1}.
Now, let $\xi_a$ be a zero of $f-a$ with multiplicity $l$ satisfying the given constraints.
Then, $\operatorname{div}^0_{H_a}(\xi_a)\geq2(l-1)-l=l-2\geq0$ if $l\geq2$; if $l=1$, then $\operatorname{div}^0_{\frac{1}{H_a}}(\xi_a)\leq1$.
Analogous analysis leads to that $H_a$ has a pole at each zero of $f+a$, which is not a zero of $g$, and which is neither a zero nor a pole of $h,k$ and their partial derivatives, with multiplicity at most $1$.

By virtue of \eqref{Eq3.1} and Claims 1-4, it is easy to have
\begin{equation}\label{Eq3.4}
N(r,H_a)\leq\overline{N}\Big(r,\frac{1}{f-a}\Big)+\overline{N}\Big(r,\frac{1}{f+a}\Big)+O(T(r,h)+T(r,k)).
\end{equation}

When $H_a\equiv0$, then $U=hf^2$ and $V=kg^2$ are both constant, which leads to the conclusion of Theorem \ref{Thm1.2}.
When $H_a\not\equiv0$, then like before one derives by \eqref{Eq3.3}-\eqref{Eq3.4} that
\begin{equation}
\begin{split}
&\hspace{4.6mm}2T(r,f)=T\Big(r,\frac{1}{f^2}\Big)+O(1)=m\Big(r,\frac{1}{f^2}\Big)+2N\Big(r,\frac{1}{f}\Big)+O(1)\\
&\leq m\Big(r,\frac{H_a}{f^2}\Big)+m\Big(r,\frac{1}{H_a}\Big)+N\Big(r,\frac{1}{H_a}\Big)+O(T(r,h)+T(r,k))\\
&\leq T(r,H_a)+S(r,f)+S(r,g)+O(T(r,h)+T(r,k))\\
&=m(r,H_a)+N(r,H_a)+S(r,f)+S(r,g)+O(T(r,h)+T(r,k))\\
&\leq\overline{N}\Big(r,\frac{1}{f-a}\Big)+\overline{N}\Big(r,\frac{1}{f+a}\Big)+S(r,f)+S(r,g)+O(T(r,h)+T(r,k)).\nonumber
\end{split}
\end{equation}
Noticing $\overline{N}\big(r,\frac{1}{f+a}\big)\leq T(r,f)+O(1)$, it follows that
\begin{equation}
\begin{split}
&\hspace{5mm}T(r,f)\leq\overline{N}\Big(r,\frac{1}{f-a}\Big)+S(r,f)+S(r,g)+O(T(r,h)+T(r,k))\\
&\leq(1-\Theta(a,f))T(r,f)+S(r,f)+S(r,g)+O(T(r,h)+T(r,k));\nonumber
\end{split}
\end{equation}
in other words, recalling $\Theta(a,f)\leq1$, we have
\begin{equation}\label{Eq3.5}
\Theta(a,f)T(r,f)=O(T(r,h)+T(r,k)),
\end{equation}
which along with the condition $\Theta(a,f)>0$ yields the conclusion of Theorem \ref{Thm3.1}.
\end{proof}

It is straightforward to have an analogous result on meromorphic solutions to
\begin{equation}\label{Eq3.6}
h(z_1,z_2)f^2+k(z_2)g^2=1.
\end{equation}

\begin{corollary}\label{Cor3.2}
Assume that $k(z_2)\not\equiv0$ is a meromorphic function in $\C$, and $h(z_1,z_2)\not\equiv0$ is a meromorphic function in $\C^2$.
Then, all meromorphic solutions $f(z_1,z_2),g(z_1,z_2)$ to equation \eqref{Eq3.6} in $\C^2$ satisfy \eqref{Eq1.2} outside a set of $r$ of finite Lebesgue measure, provided
\begin{equation}
\Z\big((hf^2)_{z_2}/(2hf)\big)=\Z(g_{z_1})\nonumber
\end{equation}
ignoring multiplicities and $g\in\Fa$ for a finite number $a\neq0$ with $a^2k\not\equiv1$.
\end{corollary}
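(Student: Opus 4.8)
The plan is to obtain Corollary \ref{Cor3.2} as a direct consequence of Theorem \ref{Thm3.1} by exploiting the symmetry between the two equations. Equation \eqref{Eq3.6} differs from equation \eqref{Eq3.1} only by interchanging the roles of the two unknowns, the two coefficients, and the two coordinates. Concretely, I would introduce the coordinate swap $\tau(z_1,z_2):=(z_2,z_1)$ and set $\hat{f}:=g\circ\tau$, $\hat{g}:=f\circ\tau$, $\hat{h}:=k\circ\tau$ (a function of $z_1$ alone, since $k=k(z_2)$), and $\hat{k}:=h\circ\tau$ (a general function of $z_1,z_2$). Substituting $\tau$ into \eqref{Eq3.6} then yields $\hat{h}(z_1)\hat{f}^2+\hat{k}(z_1,z_2)\hat{g}^2=1$, which is precisely equation \eqref{Eq3.1} for the hatted data.

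Next I would verify that the hypotheses of Corollary \ref{Cor3.2} translate exactly into those of Theorem \ref{Thm3.1} under this substitution. A chain-rule computation gives $\hat{f}_{z_2}=(g_{z_1})\circ\tau$ and $(\hat{k}\hat{g}^2)_{z_1}/(2\hat{k}\hat{g})=\big((hf^2)_{z_2}/(2hf)\big)\circ\tau$; since $\tau$ is a biholomorphic automorphism of $\C^2$, it maps zero sets bijectively to zero sets, so the condition $\Z\big((hf^2)_{z_2}/(2hf)\big)=\Z(g_{z_1})$ of Corollary \ref{Cor3.2} is equivalent to $\Z(\hat{f}_{z_2})=\Z\big((\hat{k}\hat{g}^2)_{z_1}/(2\hat{k}\hat{g})\big)$, the hypothesis of Theorem \ref{Thm3.1}. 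Likewise, because $\tau$ is a linear isometry of $\C^2$, every Nevanlinna quantity is invariant under composition with $\tau$; thus $T(r,\hat{f}^2)=T(r,g^2)$ and $\overline{N}\big(r,1/(\hat{f}-a)\big)=\overline{N}\big(r,1/(g-a)\big)$ give $\Theta(a,\hat{f})=\Theta(a,g)>0$, so $\hat{f}\in\Fa$, and $a^2\hat{h}=a^2(k\circ\tau)\not\equiv1$ follows from $a^2k\not\equiv1$.

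Applying Theorem \ref{Thm3.1} to the hatted data then gives $T(r,\hat{f})+T(r,\hat{g})=O\big(T(r,\hat{h})+T(r,\hat{k})\big)$ outside a set of finite Lebesgue measure. Invariance of the characteristic function under $\tau$ yields $T(r,\hat{f})=T(r,g)$, $T(r,\hat{g})=T(r,f)$, $T(r,\hat{h})=T(r,k)$ and $T(r,\hat{k})=T(r,h)$, so this is exactly the estimate \eqref{Eq1.2} for $f,g,h,k$, which completes the argument. The only point requiring care is the invariance of the Nevanlinna-theoretic data under the coordinate swap $\tau$; but since $\tau$ is a unitary change of variables of $\C^2$, the characteristic, proximity, and (ignoring-multiplicity) counting functions are all unchanged, so this step is routine rather than an obstacle. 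Alternatively, one could mirror the proof of Theorem \ref{Thm3.1} verbatim, now using that $G=2kg_{z_1}=V_{z_1}/g$ simplifies (as $k=k(z_2)$) and replacing the auxiliary function by $H_a:=\frac{F^2}{f^4}\frac{G^2}{g^2(g^2-a^2)}$, with the deficiency condition imposed on $g$ in place of $f$.
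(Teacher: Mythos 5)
Your proposal is correct and is essentially the paper's own argument: the paper gives no separate proof of Corollary \ref{Cor3.2}, declaring it ``straightforward'' by the evident symmetry with Theorem \ref{Thm3.1}, and your reduction via the unitary swap $\tau(z_1,z_2)=(z_2,z_1)$ --- under which the equation, the zero-set hypothesis, the deficiency condition, and all Nevanlinna quantities $m$, $N$, $\overline{N}$, $T$ transform exactly as you compute --- is precisely that symmetry made rigorous. Your closing alternative, mirroring the proof of Theorem \ref{Thm3.1} with $G=2kg_{z_1}=V_{z_1}/g$ and the auxiliary function $H_a=\frac{F^2}{f^4}\frac{G^2}{g^2(g^2-a^2)}$ with the deficiency imposed on $g$, is likewise the correct verbatim adaptation.
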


In the sequel, we consider an extension of Corollary \ref{Cor1.3} as follows.

\begin{theorem}\label{Thm3.3}
All meromorphic solutions $u(z_1,z_2)$ to the general 2d eikonal equations
\begin{equation}\label{Eq3.7}
h(z_1)u^m_{z_1}+k(z_2)u^n_{z_2}=1,
\end{equation}
with $m,n\geq2$, satisfy the estimate \eqref{Eq1.2} outside a set of $r$ of finite Lebesgue measure.
Thus, if $h(z_1),k(z_2)$ are constant, then so are $u_{z_1},u_{z_2}$; that is, $u$ must be linear.
\end{theorem}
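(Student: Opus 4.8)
The plan is to reduce \eqref{Eq3.7} to the already-treated functional equation \eqref{Eq1.3} by setting $f:=u_{z_1}$ and $g:=u_{z_2}$, so that \eqref{Eq3.7} becomes $h(z_1)f^m+k(z_2)g^n=1$ — the special case of \eqref{Eq1.3} in which $h$ is independent of $z_2$ and $k$ is independent of $z_1$. This structure pays off twice. First, since $h_{z_2}\equiv0$ and $k_{z_1}\equiv0$, the general quotients in \eqref{Eq1.4} and \eqref{Eq1.5} collapse: one computes $(hf^m)_{z_2}/(mhf^{m-1})=f_{z_2}$ and $(kg^n)_{z_1}/(nkg^{n-1})=g_{z_1}$. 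Second, because $u$ is meromorphic, equality of mixed partials gives $f_{z_2}=u_{z_1z_2}=u_{z_2z_1}=g_{z_1}$ identically, so $\Z(f_{z_2})=\Z(g_{z_1})$ holds even counting multiplicities. Hence conditions \eqref{Eq1.4} and \eqref{Eq1.5} (ignoring multiplicities) are automatically met, and for every $(m,n)\neq(2,2)$ the estimate \eqref{Eq1.2} follows at once from Theorem \ref{Thm1.2}(A).

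For $m=n=2$, Theorem \ref{Thm1.2}(B) asks in addition that $f\in\F$ or $g\in\F$, so the remaining work is to control the poles of the derivative $u_{z_1}$. I would argue locally. Fix a pole variety $S$ of $u$ lying off the finite-characteristic zero and pole loci of $h$ and $k$, let $p\ge1$ be the pole order of $u$ along $S$, and write $S=\{s=0\}$ with $ds\neq0$. If $S$ is transverse to both axes (so $s_{z_1}\not\equiv0$ and $s_{z_2}\not\equiv0$ on $S$), then $u_{z_1}$ and $u_{z_2}$ each behave like a nonzero multiple of $s^{-(p+1)}$ and so have pole order $p+1\ge2$. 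If instead $S$ is a horizontal line $\{z_2=b\}$ (so $s_{z_1}\equiv0$), then $f=u_{z_1}$ has pole order at most $p$ while $g=u_{z_2}$ has pole order $p+1$; inserting these into $hf^2+kg^2=1$, with $h,k$ finite and nonzero on $S$, shows the two terms have different pole orders ($\le 2p$ versus $2p+2$) and cannot cancel, contradicting that the sum is the regular constant $1$. The symmetric computation excludes vertical pole lines $\{z_1=a\}$. Consequently every pole of $f$ off the $h,k$-exceptional loci has multiplicity at least $2$; in particular $f$ has no simple poles there. By the strengthened form of Theorem \ref{Thm1.2}(B) recorded in the Remark — where only the simple poles of $f$ enter the estimate, since at a pole of multiplicity $\ge2$ the function $P_1$ is in fact holomorphic — this absence of simple poles already suffices to run the argument and deliver \eqref{Eq1.2} for $m=n=2$, bypassing the membership $f\in\F$ altogether.

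The linearity assertion is then immediate: if $h(z_1),k(z_2)$ are nonzero constants, then $T(r,h)=T(r,k)=O(1)$, so \eqref{Eq1.2} forces $T(r,f)+T(r,g)=O(1)$; a meromorphic function on $\C^2$ with bounded characteristic is constant, whence $u_{z_1}$ and $u_{z_2}$ are constant and $u$ is linear. I expect the genuine obstacle to be the pole analysis in the $m=n=2$ case — specifically, confirming that a derivative loses no pole multiplicity on a generic pole variety and that the equation itself forbids the degenerate axis-parallel pole varieties (via the matching of pole orders that $m=n$ enforces). Once these local facts are secured, the reduction to Theorem \ref{Thm1.2} together with the Remark carries everything through.
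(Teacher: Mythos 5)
Your proposal is correct and follows essentially the same route as the paper: reduce to Theorem \ref{Thm1.2} via $f=u_{z_1}$, $g=u_{z_2}$ with conditions \eqref{Eq1.4}--\eqref{Eq1.5} automatic from $u_{z_1z_2}=u_{z_2z_1}$, and for $m=n=2$ show that every pole of $u_{z_1}$ off the exceptional loci of $h,k$ is multiple, so that by the Remark at the end of Section \ref{PMT} the function $P_1$ has poles only from $h,k$ and the deficiency hypothesis is bypassed. Your explicit case analysis of axis-parallel pole varieties merely spells out what the paper compresses into the observation that \eqref{Eq3.7} forces $u_{z_1}$ and $u_{z_2}$ to have poles of the same multiplicity, hence coming from a pole of $u$ itself.
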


\begin{proof}
First, under the current assumption, one has
\begin{equation}
(hu^m_{z_1})_{z_2}/(mhu^{m-1}_{z_1})=u_{z_1z_2}=u_{z_2z_1}=(ku^n_{z_2})_{z_1}/(nku^{n-1}_{z_2});\nonumber
\end{equation}
as such, the conditions \eqref{Eq1.4} and \eqref{Eq1.5} are automatically satisfied.

When $m,n\geq2$ but $(m,n)\neq(2,2)$, then nothing is needed for further analysis.
In the case where $m=n=2$, let $\xi_\infty$ be a pole of $u_{z_1}$ or $u_{z_2}$, which is neither a zero nor a pole of $h,k$ and their derivatives.
Then, using \eqref{Eq3.7}, one sees that $\xi_\infty$ is a pole of both $u_{z_1}$ and $u_{z_2}$ of the same multiplicity, and thus a pole of $u$.
So, $\xi_\infty$ must be a multiple pole of both $u_{z_1}$ and $u_{z_2}$.
Noting the remark at the end of Section \ref{PMT}, the function $P_1$ by \eqref{Eq2.12} (after replacing $f,g$ by $u_{z_1},u_{z_2}$, respectively) only has possible poles from those of $h,k$, and therefore in this case, the deficient value condition is automatically satisfied, which finishes the proof.
\end{proof}

As we said earlier, when $h,k$ are constant, Khavinson \cite{Kh1} in 1995 proved the first result on entire solutions to the 2d eikonal equation for $m=n=2$, while Li \cite{Li05FM,Li08,Li12,Li14,Li18,LL,LY}, with Ye and L\"{u}, systemically studied its meromorphic solutions.
In this aspect, our Theorem \ref{Thm3.3} supplements those known results and provides the first one on meromorphic solutions with function coefficients $h,k$, especially in the situation where $m=n=2$.

Finally, we consider the relatively easier case of entire solutions to \eqref{Eq3.7} when either $m=1$ or $n=1$.
To proceed, we start with an example as follows.

\begin{example}
The entire function $u(z_1,z_2)=e^{z^2_1-z^3_2}+\frac{z^2_1}{2}+\frac{z^3_2}{2}$ in $\C^2$ is a solution to
\begin{equation}
h(z_1)u_{z_1}+k(z_2)u_{z_2}=1\nonumber
\end{equation}
for meromorphic functions $h(z_1)=\frac{1}{2z_1}$ and $k(z_2)=\frac{1}{3z^2_2}$ in $\C$.
Clearly, \eqref{Eq1.2} fails.
\end{example}

The two results in the sequel partially extends the studies of Li \cite[Corollary 2.7]{Li05TAMS} (see also Khavinson \cite{Kh1} and Hemmati \cite{He}) with function coefficients $h(z_1),k(z_2)$.

\begin{proposition}\label{Prop3.4}
All entire solutions $u(z_1,z_2)$ to the general 2d eikonal equations
\begin{equation}\label{Eq3.8}
h(z_1)u_{z_1}+k(z_2)u^n_{z_2}=1,
\end{equation}
satisfy the estimate \eqref{Eq1.2} outside a set of $r$ of finite Lebesgue measure for $n>2$.
On the other hand, when $n=2$ and if \eqref{Eq1.2} fails, then we have the following conditions:
\begin{equation}\label{Eq3.9}
\begin{split}
&\hspace{4.68mm}T(r,u_{z_1})=T(r,u_{z_1z_1})+S(r,u)\\
&=2T(r,u_{z_2})+S(r,u)=2T(r,u_{z_1z_2})+S(r,u)=2T(r,u_{z_2z_2})+S(r,u)
\end{split}
\end{equation}
and
\begin{equation}\label{Eq3.10}
T(r,u_{z_2})=N\Big(r,\frac{1}{u_{z_2}}\Big)+S(r,u)=\overline{N}\Big(r,\frac{1}{u_{z_2}}\Big)+S(r,u).
\end{equation}
\end{proposition}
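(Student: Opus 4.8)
The plan is to reduce Proposition \ref{Prop3.4} to the generalized Fermat setting with $m=1$ and then exploit the \emph{exact} equality of mixed partials, which is precisely the feature absent from the counterexample family preceding Corollary \ref{Cor1.3}. First I would set $f:=u_{z_1}$ and $g:=u_{z_2}$; since $u$ is entire, $f,g$ and all their partials are entire, and $f_{z_2}=u_{z_1z_2}=u_{z_2z_1}=g_{z_1}$. Equation \eqref{Eq3.8} becomes $hf+kg^n=1$, the $m=1$ instance of \eqref{Eq1.3}, from which the reasoning leading to \eqref{Eq2.4} gives $T(r,f)=nT(r,g)+O(T(r,h)+T(r,k))$; hence it suffices to control $T(r,g)$. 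Note that the auxiliary function $P$ of \eqref{Eq2.3} is useless here, since for $m=1$ its divisor is negative at the generic (simple) zeros of both $f$ and $g$, so a direct argument is required.

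The crux is the identity obtained by differentiating $hf+kg^n=1$ in $z_2$: because $h=h(z_1)$ one has $h_{z_2}=0$ and $f_{z_2}=g_{z_1}$, so $hg_{z_1}+k_{z_2}g^n+nkg^{n-1}g_{z_2}=0$, i.e. $g^{n-1}=\dfrac{-h\,(g_{z_1}/g)}{k_{z_2}+nk\,(g_{z_2}/g)}$. Applying the proximity function together with the lemma on the logarithmic derivative (Vitter, Biancofiore--Stoll) gives $m\big(r,h(g_{z_1}/g)\big)\le O(T(r,h))+S(r,g)$, while the denominator is controlled by $T\big(r,k_{z_2}+nk(g_{z_2}/g)\big)\le O(T(r,k))+T(r,g_{z_2}/g)$ and $T(r,g_{z_2}/g)\le\overline{N}(r,1/g)+S(r,g)\le T(r,g)+S(r,g)$, the poles of $g_{z_2}/g$ lying only over the zeros of the entire function $g$. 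Since $g$ is entire, $m(r,g^{n-1})=(n-1)T(r,g)$, and these bounds combine to $(n-2)T(r,g)\le S(r,g)+O(T(r,h)+T(r,k))$. For $n>2$ this forces $T(r,g)=O(T(r,h)+T(r,k))$, whence \eqref{Eq1.2} follows via the relation above, proving the first assertion.

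For $n=2$ the inequality $(n-2)T(r,g)\le\cdots$ is vacuous, which is exactly why only a conditional conclusion is available. If \eqref{Eq1.2} fails, then $T(r,h)+T(r,k)=o(T(r,g))=S(r,u)$, so every $O(T(r,h)+T(r,k))$ term degrades to $S(r,u)$ and each inequality above must hold with equality modulo $S(r,u)$. Tightness of $T(r,g_{z_2}/g)\le\overline{N}(r,1/g)+S$ and of $\overline{N}(r,1/g)\le T(r,g)$ yields $T(r,u_{z_2})=N(r,1/u_{z_2})+S(r,u)=\overline{N}(r,1/u_{z_2})+S(r,u)$, which is \eqref{Eq3.10}; tightness of the logarithmic-derivative bounds $T(r,\phi_{z_j})\le T(r,\phi)+S$ for $\phi=f,g$, together with $T(r,f)=2T(r,g)+S(r,u)$, then unwinds to the chain \eqref{Eq3.9} relating $u_{z_1},u_{z_1z_1},u_{z_2},u_{z_1z_2},u_{z_2z_2}$. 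I expect the main obstacle to be precisely this borderline analysis for $n=2$: one must verify that each of the several distinct inequalities (the proximity estimate, the logarithmic-derivative lemma in $\C^2$, the pole count $N(r,g_{z_2}/g)=\overline{N}(r,1/g)$, and the second-main-theorem comparison) becomes an equality up to $S(r,u)$, and that these equalities assemble into exactly \eqref{Eq3.9}--\eqref{Eq3.10}, all while keeping careful track of the several-variable Nevanlinna machinery.
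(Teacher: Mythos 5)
Your $n>2$ argument is correct and is, in substance, the paper's own route: after setting $f=u_{z_1}$, $g=u_{z_2}$ and differentiating \eqref{Eq3.8} in $z_2$ (using $u_{z_1z_2}=u_{z_2z_1}$), your proximity splitting of $g^{n-1}=-h\,(g_{z_1}/g)\big/\big(k'+nk\,(g_{z_2}/g)\big)$ is a hand-rolled Clunie-type estimate, whereas the paper invokes Li \cite{Li96} (cf.\ Hayman \cite[Lemma 3.3]{Hay1}) on \eqref{Eq3.13} to obtain \eqref{Eq3.14} and then argues by contradiction through the Chuang-type comparison between $T(r,u)$ and $T(r,u_{z_j})$. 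Your version is slightly more direct, since $(n-2)T(r,g)\leq S(r,g)+O(T(r,h)+T(r,k))$ dispenses with the contradiction setup. One small repair: before dividing you must dispose of the degenerate case $k'g+nkg_{z_2}\equiv0$ (equivalently $g_{z_1}\equiv0$, as $h\not\equiv0$), in which $hf$ and $kg^n$ are constant and \eqref{Eq1.2} is immediate.

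For $n=2$, however, there is a genuine gap. Your sandwich $T(r,g)\leq\overline{N}\big(r,\frac{1}{g}\big)+S(r,u)\leq N\big(r,\frac{1}{g}\big)+S(r,u)\leq T(r,g)+O(1)$ does legitimately yield \eqref{Eq3.10}, but the assertion that ``each inequality above must hold with equality modulo $S(r,u)$'' is not a valid principle: only inequalities trapped inside a closed sandwich are forced to equality, and $u_{z_1z_1}$ never appears anywhere in your chain, so the first equality of \eqref{Eq3.9}, namely $T(r,u_{z_1})=T(r,u_{z_1z_1})+S(r,u)$, is unproven --- the bound $T(r,u_{z_1z_1})\leq T(r,u_{z_1})+S(r,u)$ can a priori be strict, and nothing you wrote supplies a reverse inequality. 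The paper's missing ingredient is a \emph{second} differentiation, now in $z_1$: from \eqref{Eq3.15}, $h'u_{z_1}+hu_{z_1z_1}=-2ku_{z_2}u_{z_2z_1}$, at a generic zero $\xi_0$ of $u_{z_2}$ of multiplicity $l$ one has $u_{z_1}(\xi_0)=1/h(\chi_1)\neq0$ by \eqref{Eq3.8} and $\operatorname{div}^0_{u_{z_1z_2}}(\xi_0)\geq2l-1$ by \eqref{Eq3.13}, whence $\operatorname{div}^0_{u_{z_1z_1}+h'/h^2}(\xi_0)\geq3l-1\geq2l$. This transfers zeros of $u_{z_2}$ to zeros of $u_{z_1z_1}+h'/h^2$ with doubled multiplicity and gives the reverse bound
\begin{equation}
T(r,u_{z_2})\leq N\Big(r,\frac{1}{u_{z_2}}\Big)+S(r,u)\leq\frac{1}{2}\,T(r,u_{z_1z_1})+S(r,u)\leq\frac{1}{2}\,T(r,u_{z_1})+S(r,u),\nonumber
\end{equation}
which, against $T(r,u_{z_1})=2T(r,u_{z_2})+S(r,u)$ from \eqref{Eq3.11}, closes the sandwich, forces every link (including $T(r,u_{z_1z_1})=T(r,u_{z_1})+S(r,u)$) to be an equality, and incidentally shows $l=1$. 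Note also that the equalities $T(r,u_{z_2})=T(r,u_{z_1z_2})+S(r,u)=T(r,u_{z_2z_2})+S(r,u)$ in \eqref{Eq3.9} require the smallness $T(r,k'u_{z_2}+2ku_{z_2z_2})=S(r,u)$, i.e.\ \eqref{Eq3.14}; this is implicit in your numerator estimate but should be stated and used explicitly rather than subsumed under ``tightness.''
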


\begin{proof}
First, for every entire solution $u$ to \eqref{Eq3.8} in $\C^2$, recall a well-known fact (via the lemma on logarithmic derivative) that says
\begin{equation}
\max\{T(r,u_{z_1}),T(r,u_{z_2})\}\leq T(r,u)+S(r,u).\nonumber
\end{equation}
On the other hand, via Li \cite[Theorem 3]{Li12} based on Chuang \cite{Ch} (for a detailed English presentation, see, for example, Theorem 4.1 of Yang \cite{Ya}; interested reader may also refer to Hayman \cite[Theorem 2]{Hay2}), one has, for some absolute constants $\tau(>1),c_\tau>0$,
\begin{equation}
T(r,u)\leq c_\tau\max\{T(\tau r,u_{z_1}),T(\tau r,u_{z_2})\}+O(\log r).\nonumber
\end{equation}
Note also, using \eqref{Eq3.8}, that
\begin{equation}\label{Eq3.11}
T(r,u_{z_1})=nT(r,u_{z_2})+O(T(r,h)+T(r,k)).
\end{equation}

From now on, suppose the estimate \eqref{Eq1.2} does not hold; in this case, in view of the preceding estimates, it follows that
\begin{equation}\label{Eq3.12}
T(r,h),T(r,k)=S(r,u).
\end{equation}

Next, take the partial derivative of $z_2$ on both sides of \eqref{Eq3.8} to get
\begin{equation}\label{Eq3.13}
h(z_1)u_{z_1z_2}=\{-k'(z_2)u_{z_2}-nk(z_2)u_{z_2z_2}\}u^{n-1}_{z_2};
\end{equation}
so, via Li \cite[Section 4]{Li96} (see also Hayman \cite[Lemma 3.3]{Hay1}), we have
\begin{equation}\label{Eq3.14}
T(r,k'u_{z_2}+nku_{z_2z_2})=S(r,u),
\end{equation}
and when $n>2$, we additionally have
\begin{equation}
T(r,\{k'u_{z_2}+nku_{z_2z_2}\}u_{z_2})=S(r,u),\nonumber
\end{equation}
both combined implying $T(r,u_{z_2})=S(r,u)$, and thus $T(r,u)=S(r,u)$ in view of \eqref{Eq3.11}-\eqref{Eq3.12} as well as the estimates in front of them, a contradiction.
Therefore, when $n>2$, the estimate \eqref{Eq1.2} must hold outside a set of $r$ of finite Lebesgue measure.

On the other hand, for $n=2$, one only has \eqref{Eq3.14}, which along with \eqref{Eq3.13} (that now writes $h(z_1)u_{z_1z_2}=\{-k'(z_2)u_{z_2}-2k(z_2)u_{z_2z_2}\}u_{z_2}$) and \eqref{Eq3.12} further yields
\begin{equation}
T(r,u_{z_2})=T(r,u_{z_1z_2})+S(r,u)=T(r,u_{z_2z_2})+S(r,u).\nonumber
\end{equation}
Now, take the partial derivative of $z_1$ on both sides of \eqref{Eq3.8} to get
\begin{equation}\label{Eq3.15}
h'(z_1)u_{z_1}+h(z_1)u_{z_1z_1}=-2k(z_2)u_{z_2}u_{z_2z_1}.
\end{equation}
Let $\xi_0=(\chi_1,\chi_2)$ be a zero of $u_{z_2}$ of multiplicity $l$, with $\chi_1,\chi_2$ the associated coordinates in $\C$, that is neither a zero nor a pole of $h,k$ and their derivatives.
So, one has $u_{z_1}(\xi_0)=1/h(\chi_1)\neq0$ by \eqref{Eq3.8} and $\operatorname{div}^0_{u_{z_1z_2}}(\xi_0)\geq2l-1$ by \eqref{Eq3.13}.
Substituting both into \eqref{Eq3.15} yields
\begin{equation}
\operatorname{div}^0_{u_{z_1z_1}+\frac{h'}{h^2}}(\xi_0)\geq3l-1\geq2l\nonumber
\end{equation}
by a slight abuse of notations.
As a result, using \eqref{Eq3.12} and \eqref{Eq3.14}, one observes
\begin{equation}
\begin{split}
&\hspace{4.68mm}T(r,u_{z_2})=T\bigg(r,\frac{k'u_{z_2}+2ku_{z_2z_2}}{k'+2k\frac{u_{z_2z_2}}{u_{z_2}}}\bigg)\\
&\leq T(r,k'u_{z_2}+2ku_{z_2z_2})+T\Big(r,k'+2k\frac{u_{z_2z_2}}{u_{z_2}}\Big)+O(1)\\
&\leq m\Big(r,\frac{u_{z_2z_2}}{u_{z_2}}\Big)+N\Big(r,\frac{u_{z_2z_2}}{u_{z_2}}\Big)+O(T(r,k))+S(r,u)\\
&\leq\overline{N}\Big(r,\frac{1}{u_{z_2}}\Big)+S(r,u)\leq N\Big(r,\frac{1}{u_{z_2}}\Big)+S(r,u)\\
&\leq\frac{1}{2}N\bigg(r,\frac{1}{u_{z_1z_1}+\frac{h'}{h^2}}\bigg)+S(r,u)\leq\frac{1}{2}T\Big(r,u_{z_1z_1}+\frac{h'}{h^2}\Big)+S(r,u)\\
&\leq\frac{1}{2}T(r,u_{z_1z_1})+\frac{1}{2}T\Big(r,\frac{h'}{h^2}\Big)+S(r,u)\\
&\leq\frac{1}{2}m(r,u_{z_1z_1})+O(T(r,h))+S(r,u)\\
&\leq\frac{1}{2}m(r,u_{z_1})+\frac{1}{2}m\Big(r,\frac{u_{z_1z_1}}{u_{z_1}}\Big)+S(r,u)=\frac{1}{2}T(r,u_{z_1})+S(r,u),\nonumber
\end{split}
\end{equation}
which together with \eqref{Eq3.11} forces all inequalities to equalities; so, $l=1$.
Therefore, when $n=2$ and if \eqref{Eq1.2} fails, then we arrive at the conditions \eqref{Eq3.9} and \eqref{Eq3.10}.
\end{proof}

\begin{corollary}\label{Cor3.5}
All entire solutions $u(z_1,z_2)$ to the general 2d eikonal equations
\begin{equation}\label{Eq3.16}
h(z_1)u^m_{z_1}+k(z_2)u_{z_2}=1,
\end{equation}
satisfy the estimate \eqref{Eq1.2} outside a set of $r$ of finite Lebesgue measure for $m>2$.
On the other hand, when $m=2$ and if \eqref{Eq1.2} fails, then we have the following conditions:
\begin{equation}\label{Eq3.17}
\begin{split}
&\hspace{4.68mm}T(r,u_{z_2})=T(r,u_{z_2z_2})+S(r,u)\\
&=2T(r,u_{z_1})+S(r,u)=2T(r,u_{z_1z_1})+S(r,u)=2T(r,u_{z_1z_2})+S(r,u)
\end{split}
\end{equation}
and
\begin{equation}\label{Eq3.18}
T(r,u_{z_1})=N\Big(r,\frac{1}{u_{z_1}}\Big)+S(r,u)=\overline{N}\Big(r,\frac{1}{u_{z_1}}\Big)+S(r,u).
\end{equation}
\end{corollary}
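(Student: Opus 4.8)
The plan is to obtain Corollary \ref{Cor3.5} directly from Proposition \ref{Prop3.4} by exploiting the evident symmetry of the two equations under the interchange of the coordinates $z_1$ and $z_2$. Indeed, \eqref{Eq3.16} is nothing but \eqref{Eq3.8} read after swapping $z_1\leftrightarrow z_2$, the power carried by $u_{z_1}$ with that carried by $u_{z_2}$, and the coefficients $h\leftrightarrow k$; the two statements are mirror images of one another, so no genuinely new computation should be required.

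Concretely, I would introduce the coordinate transposition $\sigma(w_1,w_2):=(w_2,w_1)$ and set $\tilde u:=u\circ\sigma$, together with $\tilde h(w_1):=k(w_1)$ and $\tilde k(w_2):=h(w_2)$. A chain-rule computation gives $\tilde u_{w_1}=u_{z_2}\circ\sigma$ and $\tilde u_{w_2}=u_{z_1}\circ\sigma$, so that evaluating \eqref{Eq3.16} at $\sigma(w_1,w_2)$ turns it into
\begin{equation*}
\tilde h(w_1)\,\tilde u_{w_1}+\tilde k(w_2)\,\tilde u^m_{w_2}=1.
\end{equation*}
Thus $\tilde u$ is an entire solution of \eqref{Eq3.8} with $n$ replaced by $m$, with $\tilde h$ a function of $w_1$ alone and $\tilde k$ a function of $w_2$ alone, exactly the structural hypothesis of Proposition \ref{Prop3.4}; hence that result applies verbatim to $\tilde u$.

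The point that must be checked with care is that every Nevanlinna-theoretic quantity is invariant under $\sigma$. Since $\sigma$ is a unitary map of $\C^2$, it preserves the spheres $\{\n{z}=r\}$ and their rotation-invariant measures, so $T(r,\phi\circ\sigma)=T(r,\phi)$, and likewise for $m(r,\cdot)$, $N(r,\cdot)$, $\overline N(r,\cdot)$ and the error term $S(r,\cdot)$; in particular the failure of \eqref{Eq1.2} for $u$ is equivalent to its failure for $\tilde u$. Granting this, differentiating once more yields $\tilde u_{w_1w_1}=u_{z_2z_2}\circ\sigma$, $\tilde u_{w_1w_2}=u_{z_1z_2}\circ\sigma$ and $\tilde u_{w_2w_2}=u_{z_1z_1}\circ\sigma$, so Proposition \ref{Prop3.4} gives: for $m>2$, the estimate \eqref{Eq1.2} for $\tilde u$, hence for $u$; while for $m=2$ with \eqref{Eq1.2} failing, substituting these identifications into \eqref{Eq3.9}--\eqref{Eq3.10} and invoking the invariance of $T,N,\overline N$ reproduces precisely \eqref{Eq3.17}--\eqref{Eq3.18}.

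The main (and essentially only) obstacle is the bookkeeping of the last step: one must pair each term of \eqref{Eq3.9}--\eqref{Eq3.10} with the correct term of \eqref{Eq3.17}--\eqref{Eq3.18} under $u_{z_1}\leftrightarrow u_{z_2}$ and $u_{z_1z_1}\leftrightarrow u_{z_2z_2}$ (with $u_{z_1z_2}$ fixed), and confirm that the hypothesis ``$h$ depends on $z_1$ only, $k$ on $z_2$ only'' is genuinely preserved by $\sigma$. Should a self-contained argument be preferred, one may instead rerun the proof of Proposition \ref{Prop3.4} with the roles of the variables, powers and coefficients interchanged---differentiating \eqref{Eq3.16} with respect to $z_1$ and then $z_2$ in the symmetric order---without any new idea.
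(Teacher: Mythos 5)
Your proposal is correct and is precisely the argument the paper intends: Corollary \ref{Cor3.5} is stated without proof immediately after Proposition \ref{Prop3.4}, being its mirror image under the transposition $z_1\leftrightarrow z_2$ (with $h\leftrightarrow k$ and the power moved from $u_{z_2}$ to $u_{z_1}$), exactly as you spell out. Your verification that the transposition is unitary, hence preserves $T$, $m$, $N$, $\overline{N}$ and $S(r,u)$, and your matching of the terms of \eqref{Eq3.9}--\eqref{Eq3.10} with those of \eqref{Eq3.17}--\eqref{Eq3.18} are accurate, so nothing further is needed.
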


{\small{\bf Acknowledgments.}
Qi Han is supported by Texas A\&M University-San Antonio Research Council Grant \#21870120011 and the 2021 Faculty Research Fellowship from College of Arts and Sciences at Texas A\&M University-San Antonio.
W. Chen and Q. Wang are supported by the Science and Technology Research Program of Chongqing Municipal Education Commission \#KJQN202000621, the Fundamental Research Funds of Chongqing University of Posts \& Telecommunications \#CQUPT:A2018-125, and the Basic and Advanced Research Project of CQ CSTC \#cstc2019jcyj-msxmX0107.}

\end{document}